
\documentclass[a4paper,twoside,english,reqno]{amsart} 
\usepackage{latexsym,amsfonts,amssymb,verbatim,mathrsfs,cite,verbatim,amsmath,amsthm}
\usepackage[mathscr]{euscript}
\usepackage{mathptmx,mathtools}
\usepackage{xcolor,babel}
\usepackage{enumerate}
\usepackage{soul,cite,lineno,cancel}
\modulolinenumbers[2] 
\linespread{1.2}

\newtheorem{theorem}{Theorem}[section] 
\newtheorem{corollary}[theorem]{Corollary} 
\newtheorem{axiom}[theorem]{Axioms} 
 
\newtheorem{proposition}[theorem]{Proposition}
\newtheorem{assumption}[theorem]{Assumption}
\newtheorem{assumptions}[theorem]{Assumptions}

\newtheorem{definition}[theorem]{Definition}
\newtheorem{definitions}[theorem]{Definitions}
\theoremstyle{definition}

\newtheorem{remarks}[theorem]{Remarks}
\newtheorem{example}[theorem]{Example}

\newenvironment{dedication}
  {
   \itshape             
   \raggedleft          
  }
  {\par 
  }

\usepackage{academicons}
\usepackage{tikz,xcolor,hyperref}

\definecolor{lime}{HTML}{A6CE39}
\DeclareRobustCommand{\orcidicon}{
	\begin{tikzpicture}
	\draw[lime, fill=lime] (0,0) 
	circle [radius=0.16] 
	node[white] {{\fontfamily{qag}\selectfont \tiny ID}};
	\draw[white, fill=white] (-0.0625,0.095) 
	circle [radius=0.007];
	\end{tikzpicture}
	\hspace{-2mm}
}
\foreach \x in {A, ..., Z}{%
	\expandafter\xdef\csname orcid\x\endcsname{\noexpand\href{https://orcid.org/\csname orcidauthor\x\endcsname}{\noexpand\orcidicon}}
}

\newcommand{\Addresses}{{
  \bigskip
  \footnotesize

\textit{Antonio Boccuto, Anna Rita Sambucini}:
 Department of Mathematics and Computer Sciences, 06123 Perugia, (Italy).
 Email: antonio.boccuto@unipg.it,  anna.sambucini@unipg.it, ORCID ID: 0000-0003-3795-8856  \orcidA{}, 0000-0003-0161-8729  \orcidB{}; Anna Rita Sambucini ResearcherID: B-6116-2015.
}}

\begin{document}
\begin{dedication}
  To Professor Paul Leo Butzer,\\
the Scientist and the Man, with deep esteem\\  and 
  admiration  for His
notable contributions in Mathematics.\\
\phantom{A}\\
  \end{dedication}

\title[Some applications of modular convergence...]{Some applications of modular convergence  in vector lattice setting}
\author{Antonio Boccuto,  Anna Rita Sambucini}
\date{}
\maketitle 
\begin{abstract}
The main purpose of this paper is to apply  the theory of
vector lattices and the  related abstract modular convergence  to the 
context of 
Mellin-type kernels and (non)linear vector lattice-valued
operators, following the construction of an integral given in earlier papers.
\end{abstract} 

{\small
{\bf Keywords} integration,  modular convergence, vector lattice, Orlicz space, Urysohn-type integral operator.

{\bf MSC Classification} Primary 41A35; 28B05; Secondary  46A19.
}
\maketitle
\section{Introduction}\label{sec1}

In this paper
we continue the study started in 
\cite{BCSVITALI,BCSLp,BS2021,bm1,bmmellin,BDMEDITERRANEAN,bbdmkorovkin}
 were we have  extended to the vector lattice setting the problem of approximating a function $f$ by means of
Urysohn-type integral operators in  the setting of modular convergence.
 These operators are particularly useful in order to approximate a continuous or analog  signal
by means of discrete samples, and therefore they are widely applied for  instance 
 in reconstructing images, see for example \cite{AV2,A3,Bu2016-1,Bu2016-2,CS2018,CSV2019,CGAL2021,CGAL2022}.
Professor Paul Leo Butzer   is
certainly one of the Masters and pioneers  in  Approximation Theory and Signal Analysis and his works have
 formed many generations of researchers. He has worked continuously on these topics, as evidenced by his
  scientific production
 \cite{Bu2014,Bu2016-1,Bu2016-2,Bu2016-3,Bu2003,Bu2007,Bu2010,Bu1973,Bu1986,Bu2011,Bu2000,Bu1992, Bu1969,Bu2008}. 
We met him in 1990, in Capri during the conference ``Quarto Convegno di Analisi Reale e Teoria della Misura",
 in which he  presented the results of \cite{capri} in the main conference
 {\em ``The sampling theorem and its unique role in various branches of mathematics''}, which 
 has influenced our research in this area
and related topics and applications.
In \cite{BS2021} we have considered   vector lattice-valued functions defined
on a metric space, vector lattice-valued measures and we have constructed  a 
vector lattice-valued integral, involving a triple  of vector lattices,
linked by a suitable ``product''  structure. The results obtained are
 extensions of  those on the integrals investigated in \cite{BC2009}, where only finite
measures are considered, and in \cite{BCSVITALI}, where 
the vector lattices considered there have suitable properties. 
In this paper we continue   the  investigation started in \cite{BS2021}, and give examples 
and applications on moment and Mellin-type kernels, as well as linear and nonlinear 
vector lattice-valued operators.

In Section \ref{due} we recall some properties of 
vector lattices and introduce an axiomatic definition of limit and limit superior together
 with the definition of the integral. 
In Section \ref{tre} we recall the main properties   of vector lattice-valued modulars, while in Section 
\ref{ex} we give our structural assumptions  on operators and apply our results to the case of 
 Mellin-type kernels, and operators  which can be linear or nonlinear.

\section{  Axiomatic convergence and integration  in vector lattices}\label{due} 
Let  $(\mathbf{X}, \leq_X)$ be a vector lattice, and let $\vee$, $\wedge$ indicate the 
lattice suprema and infima
 in $\mathbf{X}$. The space
 $\mathbf{X}$ is  \textit{Dedekind complete} if and only if each  $\emptyset \neq A \subset \mathbf{X}$ which is 
order bounded from above, has a lattice supremum  in $\mathbf{X}$, called $\bigvee A$.
In this paper $\mathbf{X}$ will be a Dedekind complete vector lattice, and  $\mathbf{X}^{+}$ will be its positive cone. 
For any $x \in \mathbf{X}$, put $\lvert x \rvert  = x \vee (-x)$.
An extra element $+\infty$ is added to $\mathbf{X}$, 
which extends in a natural way the operations and the structure of order. The symbol
$\overline{\mathbf{X}}$ denotes the set $\mathbf{X} \cup \{+ \infty\}$, 
and we suppose $0 \cdot (+ \infty) = 0$, by convention.
Let ${\mathcal T}:= \{ (x_n)_n \subset \mathbf{X}\}$  and ${\mathcal T}^{+}=\{ (x_n)_n \in {\mathcal T}$:
$x_n \geq_X 0, \,\, \forall \,\, n \in \mathbb{N}\}$.\\
An \emph{$(o)$-sequence} $(\sigma_l)_{l \in \mathbb{N}}$ in  $\mathbf{X}^{+}$ is a decreasing sequence such that
$\wedge_l \sigma_l=0$. For what  unexplained  we refer, for 
instance,  to \cite{BC2009,BS2021,LZ,MEYER,SCHAEFER,VULIKH,KOZLOWSKI}.\\
Now we recall the axioms given in \cite{BS2021} in order to present the 
abstract convergence in the vector lattice context (see, 
e.g., \cite[Definition 2.1]{BC2009}  
and \cite{bbdmkorovkin}). 
\begin{axiom}\label{convergenze}
{\rm (\cite[Axioms 2.1]{BS2021})}
\rm Given  a linear subspace $\mathcal{S}$ of ${\mathcal T}$,
a \em convergence \rm is a pair $(\mathcal{S},\ell)$, with
 $\ell:\mathcal{S} \to \mathbf{X}$,  which satisfies the conditions 
of linearity and monotonicity  and the
following properties for every $(x_n)_n$, $(y_n)_n , (z_n)_n\in \mathcal{S}$: 
\begin{itemize} \label{convergenceaxioms}
\item[{\ref{convergenze}.a)}] 
If $(x_n)_n$ has the property 
that $x_n=l$ definitely, then $(x_n)_n \in \mathcal{S}$ and $\ell((x_n)_n)=l$; if 
 the set $\{n \in \mathbb{N}: x_n \neq y_n \}$ is finite and $(x_n)_n \in \mathcal{S}$, then $(y_n)_n \in \mathcal{S}$ 
and $\ell((y_n)_n)=\ell((x_n)_n)$.
\item[{\ref{convergenze}.b)}] 
If $(x_n)_n \in \mathcal{S}$, then $(\lvert x_n \rvert)_n \in \mathcal{S}$ and $\ell((\lvert x_n \rvert)_n)= \vert \ell((x_n)_n) \rvert$.
\item[{\ref{convergenze}.c)}]
If  $(x_n)_n$, $(z_n)_n \in \mathcal{S}$, 
$\ell((x_n)_n)=\ell((z_n)_n)$, and  $x_n \leq_X y_n \leq_X z_n$ definitely, then $(y_n)_n \in \mathcal{S}$ and
$\ell((x_n)_n)= \ell((y_n)_n)=\ell((z_n)_n))$.
\item[{\ref{convergenze}.d)}]
 If $u \in \mathbf{X}^{+}$, then 
$\Bigl(\dfrac1n u\Bigr)_n \in$ 
$\mathcal{S}$ and 
$\ell\Bigl(\Bigl(\dfrac1n u\Bigr)_n\Bigr)=0$.
\end{itemize}
\end{axiom}
\noindent
An axiomatic approach for a ``limit superior''-type vector lattice-valued operator  
 satisfying Axioms \ref{convergenceaxioms} is the following:

\begin{axiom}\label{limsuppresentation}
{\rm (see \cite[Axioms 2.2]{BS2021})}
\rm Let ${\mathcal T}$, $\mathcal{S}$ be as in Axioms \ref{convergenze},  
$(x_n)_n, (y_n)_n \in {\mathcal T}^{+}$,
 and define a function $\overline{\ell}: {\mathcal T}^{+}
\to \overline{\mathbf{X}}^{+}$  satisfying the 
 conditions of subadditivity and monotonicity, and the next properties:
\begin{itemize}
\item[{\ref{limsuppresentation}.a)}]
If   $x_n=y_n$ definitely, then $\overline{\ell}((x_n)_n)=\overline{\ell}((y_n)_n)$.
\item[{\ref{limsuppresentation}.b)}]
If $(x_n)_n \in  \mathcal{S}$, then   ${\overline{\ell}}((x_n)_n) =\ell((x_n)_n)$.
\item[{\ref{limsuppresentation}.c)}]
If  $(x_n)_n$ has the property that  ${\overline{\ell}}((x_n)_n) =0$, then $(x_n)_n \in \mathcal{S}$ and \mbox{$\ell((x_n)_n)=0$.}
\end{itemize}
\end{axiom}
In order to introduce an abstract integral for vector lattice-valued 
functions with respect to (possibly infinite) vector lattice-valued measures, 
extending the integrals given in \cite{BC2009,BC20092,BCSVITALI}, we assume
some relations between the order and the addition and product operations.

\begin{assumption}\label{ass}
{\rm (\cite[Axioms 3.1]{BS2021})}
\rm Let $\ell$,
$\ell_{\mathbb{R}}$ be two convergences,
satisfying Axioms \ref{convergenze}. 
We say that  $(\mathbf{X},\mathbb{R},\mathbf{X})$ is a \emph{product triple}
iff there   is 
a ``product operation'' $\cdot :\mathbf{X} \times \mathbb{R} \to \mathbf{X}$, 
fulfilling the distributivity laws with respect to the sum,
compatible with respect to the order and to the operations 
of supremum and infimum, and satisfying 
the following two additional  conditions:
\begin{itemize}\label{compatibility}
\item[{\ref{ass}.1)}] 
if $(x_n)_n \subset {\mathbf{X}}$ with $\ell((x_n)_n) = 0$ and $y \in {\mathbb{R}}$, then 
${\ell}((x_n \cdot y )_n)=0$;
\item[{\ref{ass}.2)}] 
if $x \in {\mathbf{X}}$ and $(y_n)_n\subset {\mathbb{R}}$ with $\ell_{\mathbb{R}} ((y_n)_n) = 0$,
then ${\ell}((x \cdot y_n )_n)=0$.
\end{itemize}
\end{assumption}
Let $G=(G,d)$ be a metric space,  ${\mathcal P}(G)$ be the family of all subsets of $G$, ${\mathcal A} \subset {\mathcal P}(G)$ be an algebra,
$\mu:{\mathcal A} \to  \overline{\mathbb{R}}^{+}$ be a finitely additive measure; $\mu$ is said to be \emph{$\sigma$-finite} if and only if there is an (increasing) sequence  $(B_n)_n$ from ${\mathcal A}$, such that  
$\mu(B_n) \in {\mathbb{R}}^{+}  \text{ for   all   } n \in \mathbb{N}$ and
$\displaystyle{\bigcup_{n\in \mathbb{N}} B_n=G}$.\\

Now we introduce an integral for $\mathbf{X}$-valued functions with respect to a  positive,  finitely additive and $\sigma$-finite 
extended real-valued  
measure $\mu$, associated with 
convergences $\ell$, $\ell_{\mathbb{R}}$, 
fulfilling Axioms \ref{convergenze}. 
  The integral  will be an element of $\mathbf{X}$.
Similar constructions  were given also  in  
\cite{BC2009,BC20092,BCSVITALI},
in some particular cases. \\ 

We denote by $\mathscr{S}$
the set  of all simple functions defined on $G$, which assume
a finite number of values and with support contained in a set of finite measure $\mu$. For such functions, the integral is defined in a natural way (see, e.g., 
\cite{BC2009, BS2021}). \\

Now, to introduce the integral for more general functions, we use 
 the definitions of uniform convergence and convergence in measure 
and in $L^1$.
\begin{definitions}\label{measureuniform} \rm 
\mbox{(see \cite[Definition 3.2]{BS2021})}
Let $A \in {\mathcal A}$ and $(f_n)_n \subset \mathbf{X}^G$.
We say that: 
\begin{itemize}
\item $(f_n)_n$ is \emph{uniformly convergent} to $f\in \mathbf{X}^G$ on $A$
if and only if   
\[\displaystyle{\ell \Bigl(\Bigl( \bigvee_{g \in A} \, \lvert f_n(g)-f(g) \rvert \Bigr)_n\Bigr)=0};\]
\item 
$(f_n)_n$ \em converges in $\mu$-measure \rm to $f\in \mathbf{X}^G$ on $A$ if and only if  there exists
 $(A_n)_n \subset \mathcal{A}$, such that $\ell_{\mathbb{R}}((\mu(A \cap A_n))_n)=0$ and 
\[\displaystyle{ \ell \Bigl(\Bigl(\bigvee_{g \in A \setminus A_n} \lvert f_n(g)-f(g) \rvert\Bigr)_n\Bigr)=0};\]
\item  $(f_n)_n \subset \mathscr{S}$ \em converges in $L^1$ \rm to $f \in \mathscr{S}$ if and only if 
\[
 \ell \left( \left(\int_G \lvert f_n(g)-f(g) \rvert \, d\mu(g) \right)_n \, \right)=0. 
\]
\end{itemize}
\end{definitions} 
\noindent Convergence in measure is a consequence of the uniform convergence  and, if ${\mathbf{X}}=\mathbb{R}$,  the convergences here defined 
are equivalent to the classical ones (see, e.g., \cite[Remark 3.3]{BC2009}).
\begin{definition}\label{eac}
\mbox{\rm (see \cite[Definition 3.5]{BS2021})}
\rm A sequence $(f_n)_n \subset \mathscr{S}$
is $\mu$-\emph{equiabsolutely continuous} if and only if
\begin{itemize}
\item[{\rm \ref{eac}.1)}] 
$\displaystyle{\ell \left( \left(\int_{A_n}\, \lvert f_n (g) \rvert \, d\mu(g)\right)_n \right)=0}$\,\,  whenever \,\, $\ell_{\mathbb{R}}((\mu( A_n))_n)=0$;
\item[{\rm \ref{eac}.2)}]
 there exists an increasing sequence  $(B_m)_m \subset {\mathcal A}$ with
$\mu(B_m)\in {\mathbb{R}} < + \infty$ for each $m \in \mathbb{N}$, and
\begin{eqnarray*}\label{eaceac2}
\ell\Bigl(\Bigl(\overline{\ell}\Bigl( \Bigl(\int_{G \setminus B_m} \lvert f_n(g) \rvert \, d\mu(g) \Bigr)_n   \Bigr) \Bigr)_m \Bigr)=0.
\end{eqnarray*}
\end{itemize}
\end{definition}
We say that a sequence $(f_n)_n$ in $\mathscr{S}$ is \emph{defining for $f \in {\mathbf{X}}^G$} if and only if it converges in $\mu$-measure to $f$  on each set 
$A \in {\mathcal A}$ of finite measure $\mu$,
and their  integrals are $\mu$-equiabsolutely continuous.
\begin{definition}\label{integrabilita} \rm 
A positive function $f \in {\mathbf{X}}^G$ is \em integrable \rm on $G$ if and only if there are a defining sequence $(f_n)_n$ 
for $f$ and   a function 
$l:{\mathcal A} \to \mathbf{X}$, such that
\begin{eqnarray*}
	\ell \Bigl( \Bigl(\bigvee_{A\in {\mathcal A}} \lvert \int_A f_n(g) \, d\mu(g) - l(A)\rvert \Bigr)_n \Bigr)=0,
\end{eqnarray*} 
and we put
\begin{eqnarray}\label{integral}
	\int_A f(g) \, d\mu(g):=l(A) \quad \text{ for  every  } A \in {\mathcal A}.
\end{eqnarray}
\end{definition}
For not necessarily positive functions the integral, as usual, is given using $f^+(g)=f(g)\vee 0$, $f^-(g)=(-f(g))\vee 0$,\, for every $g \in G$, namely:
\begin{definition}\label{integrabilitaa} \rm
A function $f: G \to {\mathbf{X}}$ is \emph{integrable} on $G$ if
and only if the functions $f^{\pm}$  are integrable on $G$, 
and in this case we define 
\begin{eqnarray}\label{integrall}
\int_A f(g) \, d\mu(g) = \int_A f^+(g) \, d\mu(g) - \int_A f^-(g) \, d\mu(g),\quad A \in {\mathcal A}.
\end{eqnarray}
\end{definition}
For the properties of this integral and the convergence results, we refer to \cite[Section 3]{BS2021}; in particular, when 
${\mathbb{X}}=\mathbb{R}$ and $\ell_{\mathbb{R}}$
is the usual limit, this integral coincides with the Lebesgue one.
We denote by $\mathscr{L}({\mathbf{X}}, \mu)$ the space of all functions 
$f: G \to \mathbf{X}$ which are integrable
 in the sense of formulas 
(\ref{integral}) and (\ref{integrall}).
\vspace{3mm}

\noindent Now we recall uniform continuity in the vector lattice context
(see, e.g., \cite{BCSVITALI, BS2021}).
\begin{itemize}
\item We say that 
$f:G \to \mathbf{X}$ is $\emph{uniformly continuous on }$  $G$ if and only if $\,\, \exists\, \,\,  u \in {\mathbf{X}}^ +: $ 
$\forall \,\, \varepsilon \in \mathbb{R}^+,  \,\,\exists\,\, \delta \in \mathbb{R}^+ $ such that
\[ \lvert f(g_1) -f(g_2)\rvert \leq_{X}  \varepsilon \, u 
\quad \forall \,\, g_1, g_2\in G\, : \,\,
d(g_1,g_2) \leq \delta.\]
\item 
A function $\psi:\mathbf{X}\to \mathbf{X}$ is \emph{uniformly continuous on }  $\mathbf{X}$ if and only if 
 $\forall \,\, u \in {\mathbf{X}}^+$ and $ \varepsilon \in  \mathbb{R}^+ $,
 $\exists\,\, w \in {\mathbf{X}}^+$ and $
\exists\,\,  \delta \in  \mathbb{R}^+ $ such that
\[\lvert \psi(x_1)-\psi(x_2) \rvert \leq_{X} \, \varepsilon \, w ,\, \,\mbox{  whenever  } \, \lvert x_1-x_2\rvert \leq_{X} \delta \, u.\]
\end{itemize}
If $G={\mathbf{X}}=\mathbb{R}$ with the usual topology, the two  definitions of uniform continuity coincide with the classical one
(see  \cite{BCSVITALI, BS2021}).
\section{Modulars}\label{tre}
 Let $T$ be a vector subspace of ${\mathbf{X}}^G$ such that,
if $f \in T$ and $A \in {\mathcal A}$, then ${\color{red}
\lvert f \rvert }\in T$ and
$f \cdot \chi_A \in T$, 
where the symbol $\chi_A$ denotes the 
\emph{characteristic 
function} associated with the set $A$, that is the function which
associates the value $1$ to every element of $A$ and the value $0$ 
to every element of $G \setminus A$. 
\\

A functional $\rho:T \to \overline{\mathbf X}^{+}$ is a \textit{modular} on $T$ if and only if the following properties hold:
\begin{itemize}
\item[($m_0$)] $\rho(0)=0$;
\item[($m_1$)] $\rho(-f)=\rho(f)$ for each $f \in T$;
\item[($m_2$)] $\rho(c_1 f + c_2 h) \leq_{X} \rho(f) + \rho(h)$ for 
every $f$, $h \in T$ and for all $c_1$, $c_2 \in 
\mathbb{R}_0^{+}$ such that
\mbox{$c_1 + c_2 =1$.}
\end{itemize}
A modular $\rho$ is \textit{monotone} if and only if $\rho(f) \leq_{X} \rho(h)$ for every $f$, $h \in T$ such that $
\lvert f \rvert \leq_{X} \lvert h \rvert$.
If $f \in T$, then $
\lvert f \rvert
 \in T$ and $\rho(f) = 
\rho(\lvert f \rvert)$ (see, e.g., \cite{BMV, BCSVITALI}); while $\rho$ is \emph{convex} if and only if  $\rho(c_1 f + c_2 h) \leq_{X} c_1 \, \rho(f) + c_2 \, \rho(h)$ 
for all $f$, $h \in T$, $c_1, c_2 \in 
\mathbb{R}_0^{+}$ with $c_1 + c_2 =1$.
For a related literature on modulars see, e.g., \cite{BMV, BCSVITALI,
BCSLp, KOZLOWSKI} and the 
 references therein.\\
Proceeding now  analogously as in \cite[Proposition 3.1]{BCSVITALI}, it is possible to see that, 
if \mbox{$\varphi: {\mathbf{X}} \to {\mathbf{X}}$} is increasing on $
{\mathbf{X}}^{+}$ and $\varphi(0)=0$,
then it makes sense to define the set
\begin{eqnarray*}\label{preorlicz}
	{ \mathcal L}^{\varphi}= \Bigl\{ f \in  {{\mathbf{X}}}^G: 
	\, \, \int_G \varphi( \lvert f(g) \rvert ) \, d\mu(g) \text{   exists   in  } {\mathbf{X}} \Bigr\} ,
\end{eqnarray*}
and then the $\mathbf{X}$-valued operator $\rho^{\varphi}$  defined by
\begin{eqnarray*}\label{orliczmodular}
	\rho^{\varphi}(f)=\int_G \varphi(\lvert f(g) \rvert) \, d\mu(g),
\quad f \in {\mathcal L}^{\varphi},
\end{eqnarray*} 
is a monotone modular  and, if $\varphi$ is convex, then  $\rho^{\varphi}$ is convex on the set of the positive functions 
of ${\mathcal L}^{\varphi}$. 
 The set 
\begin{eqnarray*}\label{orliczriesz} 
	\displaystyle{L^{\varphi}(G)= \Bigl\{f \in {\mathbf{X}}^G: 
\bigwedge_{a \in \mathbb{R}^+}  \rho^{\varphi}(a \, f) =0  
\Bigr\}}
\end{eqnarray*}
is the \emph{Orlicz space} generated by $\varphi$. 
Note that $L^{\varphi}(G)$ is  a vector space
and,  if $a \in \mathbb{R}^+$ and $a \, f \in L^{\varphi}(G)$, then $b \, f \in
L^{\varphi}(G)$ whenever $b \in ]0,a[$.
The subspace $E^{\varphi}(G)$ of $L^{\varphi}(G)$, defined by
setting
\begin{eqnarray*}\label{Evarphi}
	E^{\varphi}(G) :=\{ f \in L^{\varphi}(G): \rho^{\varphi}
	(a \, f) \in \mathbf{X} \text{  for  each   }a \in
	 \mathbb{R}^+ \}, 
\end{eqnarray*}
is the \emph{space of the finite elements of} $L^{\varphi}(G)$.

We say that a sequence $(f_n)_n$  from $L^{\varphi}(G)$ is \textit{modularly convergent} to $f \in L^{\varphi}(G)$ if and only if 
there exists $a \in \mathbb{R}^+ $ with
\begin{eqnarray}\label{modularconvergence0}
\displaystyle{{\ell} (( \rho^{\varphi}(a(f_n-f)))_n)=0}. 
\end{eqnarray}

\section{Structural assumptions on operators and examples}\label{ex}
As examples and applications of the results given in the previous 
sections, we consider (non)linear
Mellin-type operators with values in vector lattices.
In this setting, we 
take $G=(\mathbb{R}^+,d_{\ln})$, where
$d_{\ln} (t_1, t_2) =\lvert \ln t_1 - \ln t_2\rvert$, $t_1, t_2 \in \mathbb{R}^+ $,
and for each measurable set $S \subset \mathbb{R}^+$ we 
 put
$\displaystyle{\mu(S)= \int_S \frac{dt}{t}}.$
Let $\mathcal M$ be the set of all sequences of functions
$\widetilde{L}_n$ defined on $\mathbb{R}^+$,
non-negative and $\mu$-integrable,
and such 
that for any $n \in \mathbb{N}$ and 
every $s \in \mathbb{R}^+$ (resp., 
$t \in \mathbb{R}^+$) the map $t \mapsto \widetilde{L}_n \Bigl( 
\dfrac{t}{s} \Bigr)$ (resp., $s \mapsto \widetilde{L}_n \Bigl( 
\dfrac{t}{s} \Bigr)$\ )
is ($\mu$-integrable and) bounded. 
We now introduce some structural assumptions.
\begin{assumptions} \label{assumptionsmellin}\rm
~
\begin{itemize}
\item[\ref{assumptionsmellin}.a)] Let $\Psi$ be the class of all functions $\psi :\mathbf{X}^{+} \to  \mathbf{X}^{+}$ 
with the property that
\begin{itemize}
\item[{\ref{assumptionsmellin}.a.1)}] 
$\psi$ is uniformly continuous and increasing on ${\mathbf{X}}^{+}$, $\psi(0)=0$ and 
$\psi(v) \in {\mathbf{X}}^+\setminus \{0\}$ for all $v \in {\mathbf{X}}^+\setminus \{0\}$. 
\end{itemize}
Let $\Xi= (\psi_n)_n \subset \Psi$ be a sequence of functions, satisfying the following conditions:
\begin{itemize}
\item[{\ref{assumptionsmellin}.a.2)}] 
$(\psi_n)_n$ is \emph{equicontinuous at} 0, namely for each
$u \in {\mathbf{X}}^+\setminus \{0\}$ and $\varepsilon \in  \mathbb{R}^+$
there exist $w \in {\mathbf{X}}^+\setminus \{0\}$ and  
 $\delta \in \mathbb{R}^+$ such that $\psi_n(x) \leq_{X}   \varepsilon \, w$ whenever $x \leq_{X} \delta \, u$ and for all
$n \in \mathbb{N}$;
\item[{\ref{assumptionsmellin}.a.3)}] 
for each $v \in {\mathbf{X}}^{+}$ the sequence $(\psi_n(v))_n$ is \emph{order equibounded}, that is there is
$A_v \in {\mathbf{X}}^+\setminus \{0\}$ 
such that $\psi_n(v) \leq_{X} A_v$ for all $n \in \mathbb{N}$.
\end{itemize}
\item[\ref{assumptionsmellin}.b)] Let $\Xi= (\psi_n)_n \subset \Psi$ be as in \ref{assumptionsmellin}.a), and
denote by ${\widetilde{\mathcal K}}_{\Xi}$ the set of all sequences of functions
$\widetilde{K}_n: \mathbb{R}^+ \times \mathbf{X} \to 
{\mathbf{X}}$, $n \in \mathbb{N}$, such that
\begin{itemize}
\item[\ref{assumptionsmellin}.b.1)]
 $\widetilde{K}_n(\cdot,u) \in \mathscr{L}(\mathbf{X},\mu)$ for each $
u \in {\mathbf{X}}$ and $n \in \mathbb{N}$, and $\widetilde{K}_n(t,0) = 0 $ for all
$n \in \mathbb{N}$ and $t \in \mathbb{R}^+$;
\item[\ref{assumptionsmellin}.b.2)]
there are sequences $(\widetilde{L}_n)_n \subset \mathcal M$ and
$(\psi_n)_n \subset \Psi$ with
\begin{align*}\label{mellinlipschitz}
\lvert\widetilde{K}_n(t,u)-\widetilde{K}_n(t,v)\rvert \leq_{X} \widetilde{L}_n(t) \, \psi_n(\lvert u-v \rvert)
\end{align*}
for all $n \in \mathbb{N}$, $t \in \mathbb{R}^+$ and 
$u$, $v \in \mathbf{X}$.
\end{itemize}
\end{itemize}
\end{assumptions}
Let $\widetilde{\mathbb{K}}=(\widetilde{K}_n)_n \in {\widetilde{\mathcal K}}_{\Xi}$, and consider
a sequence $\widetilde{\textbf{T}}=(\widetilde{T_n})_n$ of nonlinear 
Mellin-type operators defined by
\begin{eqnarray}\label{mellinoperatori}
(\widetilde{T_n} f)(s)=\int_0^{+ \infty} \, \widetilde{K}_n\Bigl(\frac{t}{s},f(t)\Bigr)
\frac{dt}{t}, \quad 
\end{eqnarray}
$n \in \mathbb{N}$,
$s \in \mathcal{\mathbb{R}}^+$, $f \in $ Dom $\widetilde{\textbf{T}}=$ $\displaystyle{\bigcap_{n=1}^{\infty}}$
Dom $\widetilde{T_n}$, where  Dom $\widetilde{T_n}$ is the
set of the functions $f$ for which the integral in 
(\ref{mellinoperatori}) makes sense.
The concept of singularity in the context of Mellin operators can be introduced in the following form, 
given in \cite[Definition 6.7]{BS2021}.
\begin{definition}\label{intro2mellin}
 \rm 
We say that $\widetilde{\mathbb{K}}= (\widetilde{K}_n)_n$ is 
\emph{$U$-singular} if and only if 
there exist 
an infinite set 
$H \subset \mathbb{N}$ and a positive real number
$D^{(1)}$, fulfilling the following conditions:
\begin{itemize}
\item[{\ref{intro2mellin}.1)}] for each $n \in H$, 
$\displaystyle{
\int_0^{+ \infty} \widetilde{L}_n(t) \, \dfrac{dt}{t} \leq D^{(1)}}$;
\item[{\ref{intro2mellin}.2)}]
for every  $(a_n)_n \subset \mathbb{R}$, 
$\overline{\ell}((a_n)_{n\in \mathbb{N}})=
\overline{\ell}((a_n)_{n\in H})$;
\item[{\ref{intro2mellin}.3)}]
$\displaystyle{\int_0^{+ \infty} \widetilde{L}_n(t) \, \dfrac{dt}{t} >0 \quad \text{for  each } n \in \mathbb{N}};$
\item[{\ref{intro2mellin}.4)}]
for any 
$\delta \in \mathbb{R}^+$, $\delta > 1$, one has 
\begin{eqnarray*}\label{checkmellin}
\displaystyle{ {\ell}_{\mathbb{R}} \Bigl( \Bigl(
 \int_{\mathbb{R}^+ \setminus [1/\delta, \delta]}
 \widetilde{L}_n(t) \, \dfrac{dt}{t}
}\Bigr)_n \Bigr)=0;   
\end{eqnarray*} 
\item[{\ref{intro2mellin}.5)}] 
there exist \mbox{$z \in {\mathbf{X}}^+ \setminus \{0\}$} 
and an $(o)$-sequence $(\varepsilon_n)_n$ in ${\mathbb{R}}^+$ 
such that
\begin{eqnarray*}\label{Thetanmellin}
\bigvee_{u \in \mathbf{X} \setminus \{0\}} 
\bigg\lvert \int_0^{+\infty} \widetilde{K}_n(t,u) \, \frac{dt}{t} -u \bigg\rvert
\leq_{X} \varepsilon_n \, z \quad  \text{for   any   } n \in H.
\end{eqnarray*}
\end{itemize}
\end{definition}
%

In general, in the examples and applications, it is not very easy to verify
\ref{intro2mellin}.5) directly.
So, we give a sufficient condition to have {\ref{intro2mellin}.5), 
which is easier to handle and verify in the practice, in the
vector lattice setting.

\begin{proposition}\label{intro3mellin}
Suppose that $\widetilde{K}_n(t,u)=\widetilde{L}_n(t) \, 
\Upsilon_n(u)$, where $\Upsilon_n: \mathbf{X} \to 
\mathbf{X}$, 
\begin{itemize}
\item[{\ref{intro3mellin}.1)}] 
$\displaystyle{\int_0^{+ \infty} \widetilde{L}_n(t) \, \dfrac{dt}{t} =1 \quad \text{for  all } n \in H}$,
and
\item[{\ref{intro3mellin}.2)}] 
there are \mbox{$v \in {\mathbf{X}}^+ \setminus \{0\}$} 
and an $(o)$-sequence $(\sigma_n)_n$ in ${\mathbb{R}}^+$ 
such that
\begin{eqnarray*}\label{Phinmellin}
\bigvee_{u \in \mathbf{X} \setminus \{0\}} 
\lvert \Upsilon_n(u) -u \rvert
\leq_{X} \sigma_n \, v \quad  \text{for   any   } n \in H.
\end{eqnarray*}
\end{itemize}
Then, \ref{intro2mellin}.5) holds.
\end{proposition}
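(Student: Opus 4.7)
The plan is essentially to exploit the factorization $\widetilde{K}_n(t,u) = \widetilde{L}_n(t)\,\Upsilon_n(u)$ to reduce the supremum appearing in \ref{intro2mellin}.5) to the supremum already controlled by \ref{intro3mellin}.2). First, I would fix $n\in H$ and $u\in \mathbf{X}\setminus\{0\}$, observe that $\Upsilon_n(u)\in\mathbf{X}$ is a constant (with respect to the integration variable $t$), and apply the scalar-homogeneity of the $\mathbf{X}$-valued integral built in Section \ref{due} in the product triple $(\mathbf{X},\mathbb{R},\mathbf{X})$ to pull $\Upsilon_n(u)$ out:
\begin{eqnarray*}
\int_0^{+\infty}\widetilde{K}_n(t,u)\,\frac{dt}{t}
=\Upsilon_n(u)\cdot\int_0^{+\infty}\widetilde{L}_n(t)\,\frac{dt}{t}.
\end{eqnarray*}

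Using \ref{intro3mellin}.1) the real-valued integral on the right equals $1$, so for every $u\in\mathbf{X}\setminus\{0\}$ and every $n\in H$ one has
\begin{eqnarray*}
\bigg\lvert\int_0^{+\infty}\widetilde{K}_n(t,u)\,\frac{dt}{t}-u\bigg\rvert
=\lvert\Upsilon_n(u)-u\rvert.
\end{eqnarray*}
Taking the supremum over $u\in\mathbf{X}\setminus\{0\}$ and invoking \ref{intro3mellin}.2) gives
\begin{eqnarray*}
\bigvee_{u\in\mathbf{X}\setminus\{0\}}
\bigg\lvert\int_0^{+\infty}\widetilde{K}_n(t,u)\,\frac{dt}{t}-u\bigg\rvert
=\bigvee_{u\in\mathbf{X}\setminus\{0\}}\lvert\Upsilon_n(u)-u\rvert
\leq_{X}\sigma_n\,v.
\end{eqnarray*}
Setting $z:=v\in\mathbf{X}^{+}\setminus\{0\}$ and $\varepsilon_n:=\sigma_n$, one has that $(\varepsilon_n)_n$ is an $(o)$-sequence in $\mathbb{R}^{+}$ and the inequality required in \ref{intro2mellin}.5) holds for every $n\in H$.

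The only genuinely delicate point, and the step I would treat most carefully, is the justification of the factorization $\int \widetilde{L}_n(t)\,\Upsilon_n(u)\,\frac{dt}{t}=\Upsilon_n(u)\cdot\int\widetilde{L}_n(t)\,\frac{dt}{t}$. Since $\Upsilon_n(u)$ is a fixed element of $\mathbf{X}$ while $\widetilde{L}_n(\cdot)$ is a non-negative real-valued $\mu$-integrable function belonging to the class $\mathcal{M}$, the integrand is obtained by approximating $\widetilde{L}_n$ from $\mathscr{S}_{\mathbb{R}}$ and multiplying coefficientwise by the constant vector $\Upsilon_n(u)$; the required identity then follows on simple functions by definition, and is transported to the limit through the axioms of the product triple (Assumption \ref{ass}, items \ref{ass}.1) and \ref{ass}.2)), which guarantee the compatibility of the convergence $\ell$ with the external product. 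Once this is in place, the remainder of the argument is immediate.
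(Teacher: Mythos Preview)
Your proof is correct and follows essentially the same route as the paper: factor $\widetilde{K}_n(t,u)=\widetilde{L}_n(t)\,\Upsilon_n(u)$, use \ref{intro3mellin}.1) to replace the integral by $1$, and then apply \ref{intro3mellin}.2) after taking the supremum. The paper presents the computation as a single chain of equalities (writing $u=\bigl(\int_0^{+\infty}\widetilde{L}_n(t)\,\tfrac{dt}{t}\bigr)\,u$ before simplifying), whereas you isolate the factorization step and justify it explicitly via the product-triple axioms; this extra care is welcome but not a different approach.
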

\begin{proof}
Let $v$ and $(\sigma_n)_n$ be related with
\ref{intro3mellin}.2).
One has
\begin{eqnarray*}\label{Bumellin} & &
\bigvee_{u \in \mathbf{X} \setminus \{0\}} 
\bigg\lvert \int_0^{+\infty} \widetilde{K}_n(t,u) \, \frac{dt}{t} -u \bigg\rvert= 
\bigvee_{u \in \mathbf{X} \setminus \{0\}} 
 \bigg\lvert \int_0^{+\infty} \widetilde{L}_n(t) \, \Upsilon_n(u) \, \frac{dt}{t} -u  \bigg\rvert = \\
 &=& \bigvee_{u \in \mathbf{X} \setminus \{0\}} 
\bigg \lvert \Bigl(\int_0^{+\infty} \widetilde{L}_n(t) \,  \frac{dt}{t} \Bigr) \, \Upsilon_n(u)- \Bigl(  \int_0^{+\infty} \widetilde{L}_n(t) \, \frac{dt}{t}\Bigr) \,
u  \bigg\rvert = \\ &=& \bigvee_{u \in \mathbf{X} \setminus \{0\}} 
\, \lvert\Upsilon_n(u)-u \rvert \leq_{X} \, \sigma_n \, v,
\end{eqnarray*}
getting \ref{intro2mellin}.5).
\end{proof}

 For the convenience of the reader we recall  also the following  result, which is a particular case of \cite[Theorems 6.5 and 6.9]{BS2021},
 that will be used in the sequel. For a more detailed description we refer to \cite[Section 6]{BS2021} (https://arxiv.org/pdf/2112.12085.pdf ).
\begin{theorem}\label{unione6.5e6.9}
Under Assumptions {\rm \ref{assumptionsmellin}}, 
assume that $\widetilde{\mathbb{K}}$ is $U$-singular.
Then, for every $f \in {\mathcal C}_c(\mathbb{R}^+)$,
the sequence $(\widetilde{T_n} f )_n$ 
is uniformly convergent to $f$  
on ${\mathbb{R}}^+$, and modularly convergent 
to $f$ with respect to the modular $\rho^{\varphi}$, where the constant $a$ in 
(\ref{modularconvergence0}) can be chosen
independently of $f$.
\end{theorem}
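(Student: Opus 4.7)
The plan is to reduce both conclusions to the scale-invariant substitution $\tau=t/s$ on $(\mathbb{R}^+,dt/t)$, and then exploit the ingredients provided by $U$-singularity together with the compact support of $f$. For the uniform convergence, for each $n\in H$ I would use the telescoping decomposition
\begin{equation*}
\widetilde{T_n}f(s)-f(s)=\int_0^{+\infty}\bigl[\widetilde{K}_n(\tau,f(\tau s))-\widetilde{K}_n(\tau,f(s))\bigr]\frac{d\tau}{\tau}+\Bigl(\int_0^{+\infty}\widetilde{K}_n(\tau,f(s))\frac{d\tau}{\tau}-f(s)\Bigr),
\end{equation*}
so that the second summand is uniformly bounded in $s$ by $\varepsilon_n z$ via \ref{intro2mellin}.5) (the case $f(s)=0$ being trivial by \ref{assumptionsmellin}.b.1)). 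For the first summand, the Lipschitz-type bound \ref{assumptionsmellin}.b.2) yields the pointwise majorant $\widetilde{L}_n(\tau)\,\psi_n(|f(\tau s)-f(s)|)$, and I would split the $\tau$-integral at $[1/\delta,\delta]$ for some $\delta>1$ close to $1$. On the inner region, vector-lattice uniform $d_{\ln}$-continuity of $f\in\mathcal{C}_c(\mathbb{R}^+)$ (its support being $d_{\ln}$-compact) together with equicontinuity of $(\psi_n)_n$ at $0$ from \ref{assumptionsmellin}.a.2) and the total mass bound $D^{(1)}$ from \ref{intro2mellin}.1) produces a majorant of shape $\varepsilon\,w\,D^{(1)}$, uniform in $s$. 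On the outer region, order-equiboundedness \ref{assumptionsmellin}.a.3) absorbs $\psi_n(|f(\tau s)-f(s)|)\leq_X A$ for some $A\in\mathbf{X}^+$, and the remaining real factor $\int_{\mathbb{R}^+\setminus[1/\delta,\delta]}\widetilde{L}_n(\tau)d\tau/\tau$ is $\ell_{\mathbb{R}}$-vanishing by \ref{intro2mellin}.4); Assumption \ref{ass}.2) lifts this to $\ell$-convergence in $\mathbf{X}$. Since all bounds are $s$-independent, they survive passage to $\bigvee_{s\in\mathbb{R}^+}|\,\cdot\,|$, and \ref{intro2mellin}.2) transfers the conclusion from $H$ to all of $\mathbb{N}$.

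For the modular convergence, let $K=\operatorname{supp}(f)\subset[\alpha,\beta]$ and pick $\delta>1$ so that $K\subset K_\delta:=[\alpha/\delta,\beta\delta]$. I would split $\rho^{\varphi}(a(\widetilde{T_n}f-f))=\int_{\mathbb{R}^+}\varphi(a|\widetilde{T_n}f(s)-f(s)|)\,ds/s$ into the integrals over $K_\delta$ and over $\mathbb{R}^+\setminus K_\delta$. The set $K_\delta$ has finite $\mu$-measure $\ln(\beta\delta^2/\alpha)$, so uniform convergence combined with monotonicity of $\varphi$ and $\varphi(0)=0$ yields an $\ell$-vanishing contribution there. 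On $\mathbb{R}^+\setminus K_\delta$ one has $f(s)=0$, and the estimate $|\widetilde{T_n}f(s)|\leq_X\int_K\widetilde{L}_n(t/s)\,\psi_n(|f(t)|)\,dt/t$ together with $t\in K$, $s\notin K_\delta$ forces $t/s$ into $\mathbb{R}^+\setminus[1/\delta,\delta]$; the substitution $\tau=t/s$ then converts this into a tail integral of $\widetilde{L}_n$ which is small by \ref{intro2mellin}.4). Choosing $a$ small enough, independently of $f$, the composition with $\varphi$ preserves both integrability and $\ell$-smallness.

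The main obstacle I foresee is precisely the modular step: the uniform pointwise bound from the first part is being integrated against the infinite $\mu$-measure on $\mathbb{R}^+$, so genuine $L^1$-type decay of $\widetilde{T_n}f$ for $s$ outside a neighbourhood of $\operatorname{supp}(f)$ must be extracted from \ref{intro2mellin}.4), and simultaneously a single $a>0$ valid for all $f\in\mathcal{C}_c(\mathbb{R}^+)$ must be located—this requires a careful scaling argument matching the support of $f$ with the concentration of $\widetilde{L}_n$ near $\tau=1$, and is where the precise interplay between the vector lattice order, the modular $\rho^{\varphi}$, and the convergences $\ell,\ell_{\mathbb{R}}$ becomes most delicate.
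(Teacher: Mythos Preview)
The paper does not actually prove this theorem: it is recalled verbatim as a particular case of \cite[Theorems~6.5 and~6.9]{BS2021}, with no argument reproduced here, so there is no in-paper proof against which to compare yours line by line.

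That said, your uniform-convergence sketch is the expected one and is essentially correct. For the modular part you have correctly isolated the obstruction but not overcome it. The bound you obtain on $\mathbb{R}^+\setminus K_\delta$, namely
\[
|\widetilde{T_n}f(s)|\;\le_X\; A\int_{\mathbb{R}^+\setminus[1/\delta,\delta]}\widetilde{L}_n(\tau)\,\frac{d\tau}{\tau},
\]
is \emph{uniform in $s$}; composing with $\varphi$ and then integrating over $\mathbb{R}^+\setminus K_\delta$, a set of infinite $\mu$-measure, yields $+\infty$, so the sentence ``composition with $\varphi$ preserves both integrability and $\ell$-smallness'' is not justified by what precedes it. The missing device---which the paper itself flags immediately after the theorem via formula~(\ref{aamu1}) and the reference to \cite[Remark~6.8.b]{BS2021}---is to integrate in $s$ \emph{before} discarding the $s$-dependence: since $ds/s$ is Haar measure on the multiplicative group $\mathbb{R}^+$, Fubini together with the substitution $\sigma=t/s$ gives, for each fixed $t\in K$,
\[
\int_{\mathbb{R}^+\setminus K_\delta}\widetilde{L}_n\Bigl(\frac{t}{s}\Bigr)\frac{ds}{s}
\;\le\;\int_{\mathbb{R}^+\setminus[1/\delta,\delta]}\widetilde{L}_n(\sigma)\,\frac{d\sigma}{\sigma},
\]
uniformly over $t\in\operatorname{supp}f$, which is exactly the $s$-tail control recorded in~(\ref{aamu1}). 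This is what converts \ref{intro2mellin}.4) into the genuine $L^1$-type decay you asked for in your final paragraph. Upgrading this to a full $\rho^{\varphi}$-statement for general $\varphi$, and extracting a single $a>0$ independent of $f$, then relies on the abstract vector-lattice Orlicz machinery developed in \cite{BS2021}; the present paper does not reproduce those steps, and your outline does not supply them either.
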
 
Now,
 we are ready to give some examples of functions $\widetilde{L}_n$ 
and vector lattice-valued maps $\Upsilon_n$,
such that 
the functions 
\begin{eqnarray*}\label{kappakappa}
\widetilde{K}_n(t,u)=\widetilde{L}_n(t) \, 
\Upsilon_n(u), \quad 
n \in \mathbb{N}, \, \, t \in \mathbb{R}^+, \, \, u \in {\mathbf{X}}
\end{eqnarray*}
satisfy all conditions on
$U$-singularity and Assumptions \ref{intro2mellin},
and such that
\begin{eqnarray}\label{q}
\int_a^b \widetilde{L}_n \Bigl( \dfrac{t}{\cdot} \Bigr) \dfrac{dt}{t} \in L^{q}(\mathbb{R}^+)
\end{eqnarray}
for every $[a,b] \subset \mathbb{R}^+$,
$q \geq 1$ and $n$ large enough, depending on $q$
(see also \cite[Corollary 5.4]{BDMEDITERRANEAN} for kernels, when $\Upsilon_n(u)= u$ and $\mathbf{X} = \mathbb{R}$).
Moreover, from $U$-singularity and using \cite[Remark 6.8.b%
]{BS2021}, we get that
 for any compact 
subset $C \subset \mathbb{R}^+$ 
there is a  set
$B \subset \mathbb{R}^+$ of finite $\mu$-measure such that  
\begin{eqnarray}\label{aamu1}
\bar{\ell}_{\mathbb{R}}  \Bigl( \Bigl( \sup_{t \in C}\int_{{\mathbb{R}}^+ \setminus B} \widetilde{L}_n
\Bigl(\dfrac{s}{t}\Bigr) \, \frac{ds}{s} \Bigr)_n \Bigr)=0.
\end{eqnarray} 
So by Theorem \ref{unione6.5e6.9}
we obtain that the sequence $(\widetilde{T_n} f )_n$ converges 
uniformly to $f$, and
we find a positive real number $a$ such that
${\ell}((\rho^{\varphi}(a(\widetilde{T_n} f-f))_n))=0$ for each 
$f \in {\mathcal C}_c(\mathbb{R}^+)$.
\begin{example}\label{exs1}
 \emph{Moment kernel}.  \,   This kernel is defined by setting
$ \widetilde{L}_n(t) = n \, t^n  \chi_{]0,1[} (t)$, \,\, 
$n \in \mathbb{N}$, $t \in \mathbb{R}^+$.
In \cite[Section 5]{BDMEDITERRANEAN} it is 
shown that
$\displaystyle{\int_0^{+ \infty} \widetilde{L}_n(t) \, \dfrac{dt}{t} =1 }$
(getting \ref{intro3mellin}.1)), and 
\[
\int_{\mathbb{R}^+ \setminus [1/\delta, \delta]}
 \widetilde{L}_n(t) \, \dfrac{dt}{t} =\dfrac{1}{\delta^n} \leq 
\dfrac{1}{n}
\]
for each $n \in \mathbb{N}$ and $\delta \in ]1, + \infty[$. 
From this and Axioms \ref{convergenze}.c), \ref{convergenze}.d)
we obtain \ref{intro2mellin}.4).
Moreover
$0 \leq \widetilde{L}_n(t) \leq n$ for every $n \in \mathbb{N}$
and $t \in \mathbb{R}^+$. From this, since 
 $\widetilde{L}_n \in L^1(\mathbb{R}^+,\mu)$
for every $n \in \mathbb{N}$,
it follows that $(\widetilde{L}_n)_n \subset \mathcal M$.
 Relation
(\ref{q}) follows from the fact that,
for each 
$[a,b] \subset \mathbb{R}^+$, 
for every $n \in \mathbb{N}$ and $s \in \mathbb{R}^+$ it is
\begin{eqnarray*}\label{moment}
\int_a^b \, \widetilde{L_n} \Bigl( \frac{t}{s} \Bigr) \frac{dt}{t} 
=\left\{
\begin{array}{ll}
\dfrac{b^n-a^n}{s^n} &  \text{ if } s \geq b,\\
\dfrac{s^n-a^n}{s^n} &  \text{ if } a\leq s<b,\\
0 &  \text{ if } 0<s<a \end{array} \right.
\end{eqnarray*}
(see also \cite[formula (5.20)]{BDMEDITERRANEAN}).
\begin{figure}[h!]
\begin{center}
\includegraphics[scale=.3]{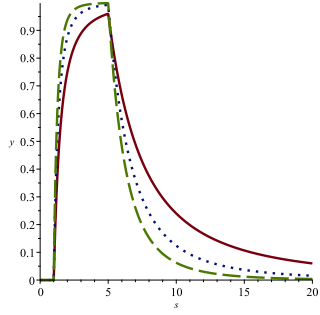}
\end{center}
\caption{\small Moment kernel: $a=1$, $b=5$, $n=2$ (line), $n=3$ (point), $n=4$  (dash).}
\end{figure}
\end{example}

\begin{example}\label{exs2}
 \emph{Mellin-Gauss-Weierstrass kernel}. 
 \,  Let
\[\widetilde{L}_n(t) = \frac{n}{2\sqrt{\pi}} e^{-\frac{n^2}{4} \ln^2 t}, \quad n \in \mathbb{N}, \, \, t \in \mathbb{R}^+.\] 
In \cite[formula (5.17)]{BDMEDITERRANEAN} it is proved that  
$\displaystyle{\int_0^{+ \infty} \widetilde{L}_n(t) \, \dfrac{dt}{t} =1 }$ for any $n \in \mathbb{N}$,
 and that for every
$\delta > 1$ there exists a positive integer $n_0=n_0(\delta)$ 
such that 
\[
\int_{\mathbb{R}^+ \setminus [1/\delta, \delta]}
 \widetilde{L}_n(t) \, \dfrac{dt}{t}  \leq 
\dfrac{2}{\sqrt{\pi}} \, e^{-\frac{n \, \ln \delta}{2}}
\leq \dfrac{1}{n}
\]
whenever $n \geq n_0$. From this and Axioms \ref{convergenze}.c), \ref{convergenze}.d)
we get \ref{intro2mellin}.4).
\\
As before,
$0 \leq \widetilde{L}_n(t) \leq n$ for any $n \in \mathbb{N}$
and $t \in \mathbb{R}^+$. From this and the 
$\mu$-integrability
 of the $\widetilde{L}_n$,
we obtain that $(\widetilde{L}_n)_n \subset \mathcal M$.

\begin{figure}[h!]
\begin{center}
\includegraphics[scale=.3]{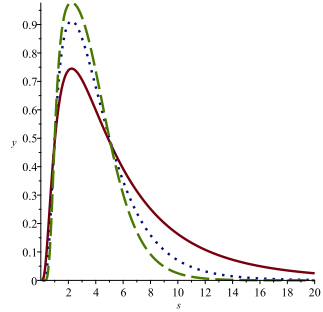}
\end{center}
\caption{\small Mellin-Gauss-Weierstrass kernel: $a=1$, $b=5$, $n=2$ (line), $n=3$ (point), $n=4$  (dash).}
\end{figure}
\end{example}

We now prove that formula  (\ref{q}) holds for the Mellin-Gauss-Weierstrass kernel.
\begin{proposition}\label{MGW}
For every $[a,b] \subset \mathbb{R}^+$ and 
$q \geq 1$ one has
\[s \mapsto \frac{n}{2\sqrt{\pi}} \int_a^b  e^{-\frac{n^2}{4} \ln^2 (\frac{t}{s})} \dfrac{dt}{t} \in L^q(\mathbb{R}^+), \quad  s \in \mathbb{R}^+\] 
for  $n \in \mathbb{N}$ large enough, depending on $q$.
\end{proposition}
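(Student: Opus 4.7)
The plan is to reduce the statement to a routine application of Young's convolution inequality by moving to logarithmic coordinates, where the Mellin-Gauss-Weierstrass structure reveals itself as an ordinary Gaussian convolution.

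First, I would pass from the multiplicative Haar measure $d\mu(s)=ds/s$ to Lebesgue measure via the substitution $s=e^{x}$, so that $L^{q}(\mathbb{R}^{+},\mu)$ corresponds to $L^{q}(\mathbb{R},dx)$. Applying the further change of variable $t=e^{y}$ in the inner integral, and setting $\alpha=\ln a$, $\beta=\ln b$, the function
\[
F_{n}(s):=\frac{n}{2\sqrt{\pi}}\int_{a}^{b} e^{-\frac{n^{2}}{4}\ln^{2}(t/s)}\frac{dt}{t}
\]
transforms into
\[
\widetilde{F}_{n}(x):=F_{n}(e^{x})=\frac{n}{2\sqrt{\pi}}\int_{\alpha}^{\beta} e^{-\frac{n^{2}}{4}(y-x)^{2}}\,dy.
\]

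Second, I would recognize the resulting object as a convolution. Setting $g_{n}(v):=\frac{n}{2\sqrt{\pi}}\,e^{-\frac{n^{2}}{4}v^{2}}$ (a Gaussian density on $\mathbb{R}$, up to a harmless normalization), the identity above reads
\[
\widetilde{F}_{n}=g_{n}*\chi_{[\alpha,\beta]}\quad\text{on }\mathbb{R}.
\]
The characteristic function $\chi_{[\alpha,\beta]}$ belongs to $L^{1}(\mathbb{R})$ with $\|\chi_{[\alpha,\beta]}\|_{1}=\beta-\alpha<+\infty$. On the other hand, a direct Gaussian computation gives
\[
\|g_{n}\|_{q}^{q}=\Bigl(\frac{n}{2\sqrt{\pi}}\Bigr)^{q}\!\int_{\mathbb{R}} e^{-\frac{qn^{2}}{4}v^{2}}\,dv=\Bigl(\frac{n}{2\sqrt{\pi}}\Bigr)^{q}\frac{2\sqrt{\pi}}{n\sqrt{q}},
\]
which is finite for every $n\in\mathbb{N}$ and every $q\geq 1$.

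Third, I would invoke Young's convolution inequality with exponents $1$ and $q$ (so that $1+\tfrac{1}{q}=\tfrac{1}{1}+\tfrac{1}{q}$): since $g_{n}\in L^{q}(\mathbb{R})$ and $\chi_{[\alpha,\beta]}\in L^{1}(\mathbb{R})$, the convolution $\widetilde{F}_{n}=g_{n}*\chi_{[\alpha,\beta]}$ lies in $L^{q}(\mathbb{R})$ with
\[
\|\widetilde{F}_{n}\|_{q}\leq\|g_{n}\|_{q}\,\|\chi_{[\alpha,\beta]}\|_{1}=(\beta-\alpha)\,\|g_{n}\|_{q}<+\infty.
\]
Unwinding the substitution $s=e^{x}$ yields $F_{n}\in L^{q}(\mathbb{R}^{+},\mu)$, which is the desired conclusion. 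There is no real obstacle: the bound is valid for every $n\in\mathbb{N}$, so the hypothesis ``$n$ large enough, depending on $q$'' is automatically satisfied, and the most delicate point is only the careful bookkeeping of the two changes of variables that convert the Mellin-Gauss-Weierstrass expression into a standard Gaussian convolution on $\mathbb{R}$.
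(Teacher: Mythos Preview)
Your argument is correct and takes a genuinely different route from the paper. The paper proceeds by a direct three-region decomposition of $\mathbb{R}^+$: after the substitution $w=\tfrac{n}{2}\ln(t/s)$ it writes $F_n(s)=\tfrac{1}{\sqrt{\pi}}\int_{\frac{n}{2}\ln(a/s)}^{\frac{n}{2}\ln(b/s)} e^{-w^{2}}\,dw$, then on $[b e^{2},+\infty)$ uses the elementary bound $e^{-w^{2}}\le e^{w}$ (valid for $w\le -1$) to dominate $F_n(s)$ by $\tfrac{1}{\sqrt{\pi}}\bigl((b/s)^{n/2}-(a/s)^{n/2}\bigr)$, on $(0,a e^{-2}]$ uses $e^{-w^{2}}\le e^{-w}$ (valid for $w\ge 1$) to get the mirror bound $\tfrac{1}{\sqrt{\pi}}\bigl((s/a)^{n/2}-(s/b)^{n/2}\bigr)$, and on the remaining compact interval $[a e^{-2}, b e^{2}]$ simply appeals to continuity. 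Your approach instead passes to logarithmic coordinates, recognizes $F_n(e^{x})$ as the convolution $g_n\ast\chi_{[\alpha,\beta]}$ of a Gaussian with a characteristic function, and finishes in one stroke with Young's inequality $\|g_n\ast\chi_{[\alpha,\beta]}\|_q\le\|g_n\|_q\|\chi_{[\alpha,\beta]}\|_1$. What you gain is brevity and a conceptually cleaner picture (the Mellin structure is exactly a Gaussian convolution after the exponential change of variables), and your bound holds for every $n\in\mathbb{N}$ rather than only for $n$ large; what the paper's hands-on estimate buys is complete self-containment, with no appeal to Young's inequality and only the single-variable inequality $e^{-w^{2}}\le e^{-|w|}$ for $|w|\ge 1$. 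One small caveat: your first line identifies $L^{q}(\mathbb{R}^{+})$ with $L^{q}(\mathbb{R}^{+},d\mu)$ for $d\mu=ds/s$; this is the natural reading in the Mellin setting of the paper, and under it your argument is complete, but you should state this identification explicitly since the paper does not spell out the measure in the proposition.
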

\begin{proof}
Let $[a,b] \subset \mathbb{R}^+$. For every $s \in \mathbb{R}^+$ it is
\begin{eqnarray*}\label{mgwtri}
\int_a^b \, \widetilde{L}_n \Bigl( \frac{t}{s} \Bigr) \frac{dt}{t}= 
\frac{1}{\sqrt{\pi}}
\int_{\frac{n}{2}\ln\bigl( \frac{a}{s} \bigr)}^{\frac{n}{2}\ln\bigl( \frac{b}{s} \bigr)}
e^{-w^2}\, dw  .
\end{eqnarray*}
Now we prove that 
\begin{eqnarray} \label{mialc1}
s \mapsto
\int_a^b \widetilde{L}_n \Bigl( \dfrac{t}{s} \Bigr) \dfrac{dt}{t} \in L^{q}([b \, e^2, + \infty[)
\end{eqnarray}
for each $q \geq 1$ and $n$ large enough, depending on $q$.\\
If $s \geq b \, e^2$, then 
$\ln\Bigl( \dfrac{b}{s} \bigr) \leq -2, $
and hence 
\[\frac{n}{2}\ln\Bigl( \frac{a}{s} \Bigr) < 
\frac{n}{2}\ln\Bigl( \frac{b}{s} \Bigr) \leq -1.\]
 Therefore
\begin{eqnarray*}\label{ew0}
\frac{1}{\sqrt{\pi}}
\int_{\frac{n}{2}\ln\bigl( \frac{a}{s} \bigr)}^{\frac{n}{2}\ln\bigl( \frac{b}{s} \bigr)}
e^{-w^2}\, dw  \leq \frac{1}{\sqrt{\pi}}
\int_{\frac{n}{2}\ln\bigl( \frac{a}{s} \bigr)}^{\frac{n}{2}\ln\bigl( \frac{b}{s} \bigr)}
e^{w}\, dw = \\ = \frac{1}{\sqrt{\pi}} \, \Bigl( 
\, \Bigl(\frac{b}{s} \Bigr)^{n/2} - \Bigl(\frac{a}{s} \Bigr)^{n/2}
\Bigr),
\end{eqnarray*}
getting (\ref{mialc1}). 
Now we claim that 
\begin{eqnarray} \label{mialc2}
s \mapsto
\int_a^b \widetilde{L}_n \Bigl( \dfrac{t}{s} \Bigr) \dfrac{dt}{t} \in L^{q}(]0, a \, e^{-2}])
\end{eqnarray}
for all $q \geq 1$ and for every $n$.
If $0 < s \leq a \, e^{-2}$, then 
$\ln\Bigl( \dfrac{a}{s} \Bigr) \geq 2, $
and thus 
\[
\frac{n}{2}\ln\Bigl( \frac{b}{s} \Bigr) >
\frac{n}{2}\ln\Bigl( \frac{a}{s} \Bigr) \geq 1.\] 
Hence,
\begin{eqnarray*}\label{ew222}
\frac{1}{\sqrt{\pi}}
\int_{\frac{n}{2}\ln\bigl( \frac{a}{s} \bigr)}^{\frac{n}{2}\ln\bigl( \frac{b}{s} \bigr)}
e^{-w^2}\, dw  \leq \frac{1}{\sqrt{\pi}}
\int_{\frac{n}{2}\ln\bigl( \frac{a}{s} \bigr)}^{\frac{n}{2}\ln\bigl( \frac{b}{s} \bigr)}
e^{-w}\, dw = \frac{1}{\sqrt{\pi}} \, \Bigl( 
\, \Bigl(\frac{s}{a} \Bigr)^{n/2} - \Bigl(\frac{s}{b} \Bigr)^{n/2}
\Bigr),
\end{eqnarray*}
obtaining (\ref{mialc2}). Furthermore, we have
\begin{eqnarray} \label{mialc3}
s \mapsto
\int_a^b \widetilde{L}_n \Bigl( \dfrac{t}{s} \Bigr) \dfrac{dt}{t} \in L^{q}([a \, e^{-2}, b \, e^2]) 
\end{eqnarray} for all $q \geq 1$ and for every $n \in \mathbb{N}$, since the function in
(\ref{mialc3}) is continuous on $([a \, e^{-2}, b \, e^2])$.
Thus, the assertion  follows from 
(\ref{mialc1}), (\ref{mialc2}) and (\ref{mialc3}).
\end{proof}

\vskip.4cm
The graphs below show how  the moment and the Mellin-Gauss-Weierstrass kernels
approximate the 
 function $f \in {\mathcal C}_c(\mathbb{R}^+) $,
$$f(t)= (t-1) \chi_{[1,2]} + (3-t) \chi_{[2,3]},$$
  for    $n=10,15:$
\begin{figure}[h!]
\includegraphics[scale=.34]{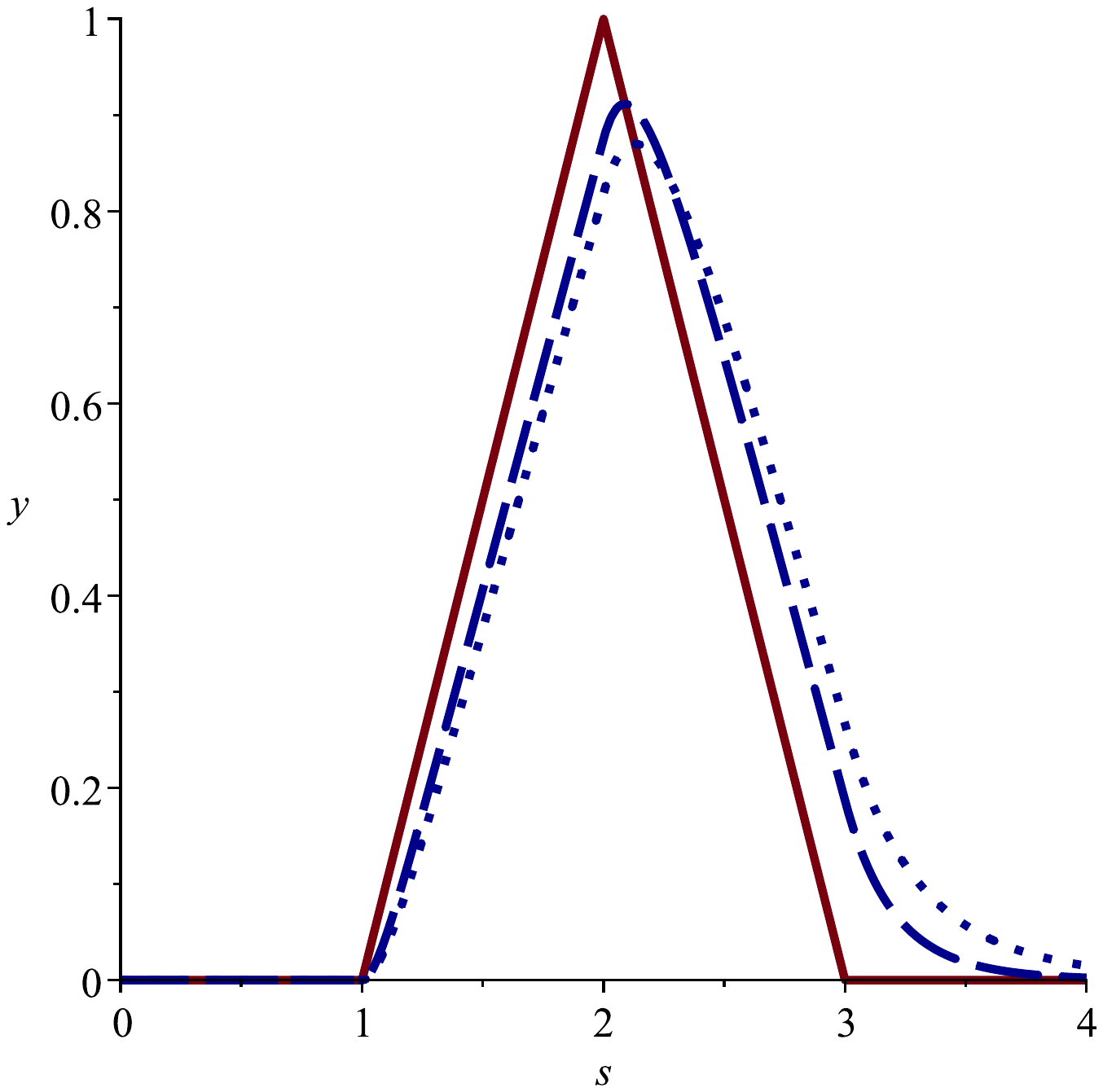}
\qquad 
\includegraphics[scale=.34]{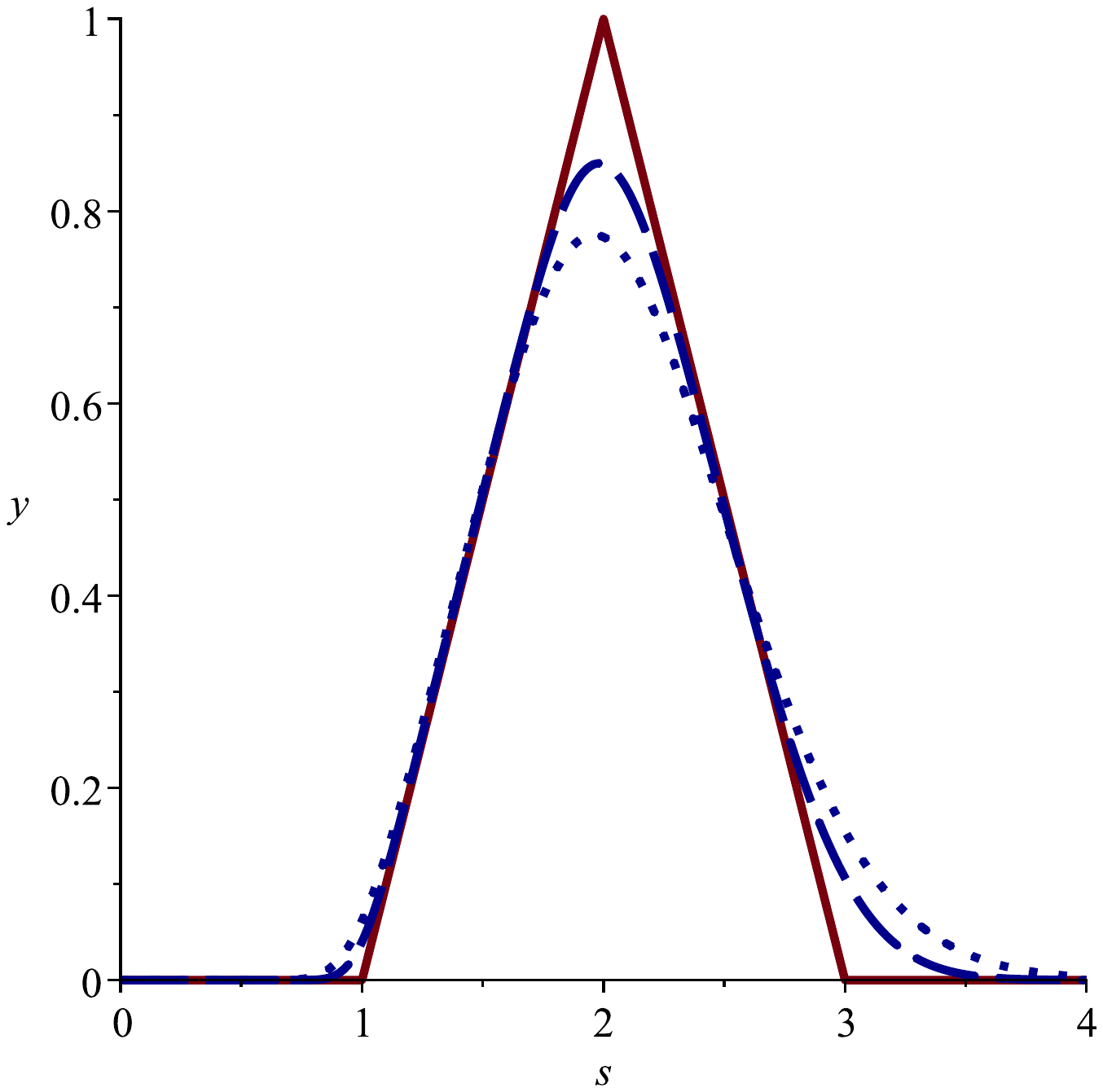}
\caption{\small Moment (Mellin-Gauss-Weierstrass) kernels applied to $f$:  $f$ (line), $\widetilde{T}_{10} f$ (point), $\widetilde{T}_{15} f$  (dash).}
\end{figure}
\\

Moreover we give the following
\begin{example}\label{exs3}
\emph{Mellin-Poisson-Cauchy kernel}.  \, 
Fix $p\in \mathbb{N}$, $p \geq 2$
and put, for every $n \in \mathbb{N}$, $t \in
\mathbb{R}^+$,
\begin{eqnarray*}\label{mpck}
\widetilde{L}_n(t)= C_p \, \dfrac{n}{(1+n^2 \ln^2 t)^p},  \,\,\, 
\mbox{ where  } \,\,\,C_p=\dfrac{2^{p-1}(p-1)!}{\pi
(2p-3)!!}.
\end{eqnarray*}

In \cite[formulas (5.10)-(5.14)]{BDMEDITERRANEAN} it is shown that 
\[\int_0^{+\infty} \widetilde{L}_n(t) \, \frac{dt}{t} =1
\quad \text{for  all   } n \in \mathbb{N}\]
and that, for any $\delta > 1$ and $n \in \mathbb{N}$,
it is
\begin{eqnarray}\label{mpc1}
\int_{\mathbb{R}^+ \setminus
[1/\delta, \delta]} \widetilde{L}_n(t) \, \frac{dt}{t}\leq 
C_p \, 
(\pi - 2 \arctan (n \, \ln \delta) ).
\end{eqnarray}
Thanks to de l'H\^{o}pital's rule, it is possible to show that 
\begin{eqnarray}\label{mpc2}
\lim_{n \to + \infty} 
\dfrac{\pi - 2 \arctan (n \, \,\ln \delta)}{1/n}=
\dfrac{2}{\ln \delta}.
\end{eqnarray}
From (\ref{mpc1}) and (\ref{mpc2}) 
 we find 
a positive real number $K_{\delta,p}$ and an integer 
$n_0=n_0(\delta, p)$ with 
\begin{eqnarray}\label{mpc3}
\int_{\mathbb{R}^+ \setminus
[1/\delta, \delta]} \widetilde{L}_n(t) \, \frac{dt}{t}\leq
K_{\delta,p} \, \, \dfrac{1}{n} 
\end{eqnarray}
whenever $n \geq n_0$. From (\ref{mpc3}) and Axioms \ref{convergenze}.c), \ref{convergenze}.d)
we deduce \ref{intro2mellin}.4).
\\
Moreover
$0 \leq \widetilde{L}_n(t) \leq n$ for each $n \in \mathbb{N}$
and $t \in \mathbb{R}^+$. From this and the 
$\mu$-integrability
 of  $\, \widetilde{L}_n$
we deduce that $(\widetilde{L}_n)_n \subset \mathcal M$.
Furthermore, we get 
\begin{eqnarray*}
s \mapsto & &
\int_a^b \widetilde{L_n} \Bigl( \frac{t}{s} \Bigr) \frac{dt}{t} 
=\frac{n 2^{p-1}(p-1)!}{\pi (2p-3)!!} \int_a^b \frac{1}{(1+n^2(\ln s - \ln t)^2)^p}
\frac{dt}{t} \\ & & \in L^q(\mathbb{R}^+), \quad  s \in \mathbb{R}^+
\end{eqnarray*}
for any $[a,b] \subset \mathbb{R}^+$, $q \geq 1$
and $n$ large enough, depending on $q$
(see \cite[formula (5.21)]{BDMEDITERRANEAN}),
getting relation (\ref{q}).
\begin{figure}[h!]
\includegraphics[scale=.34]{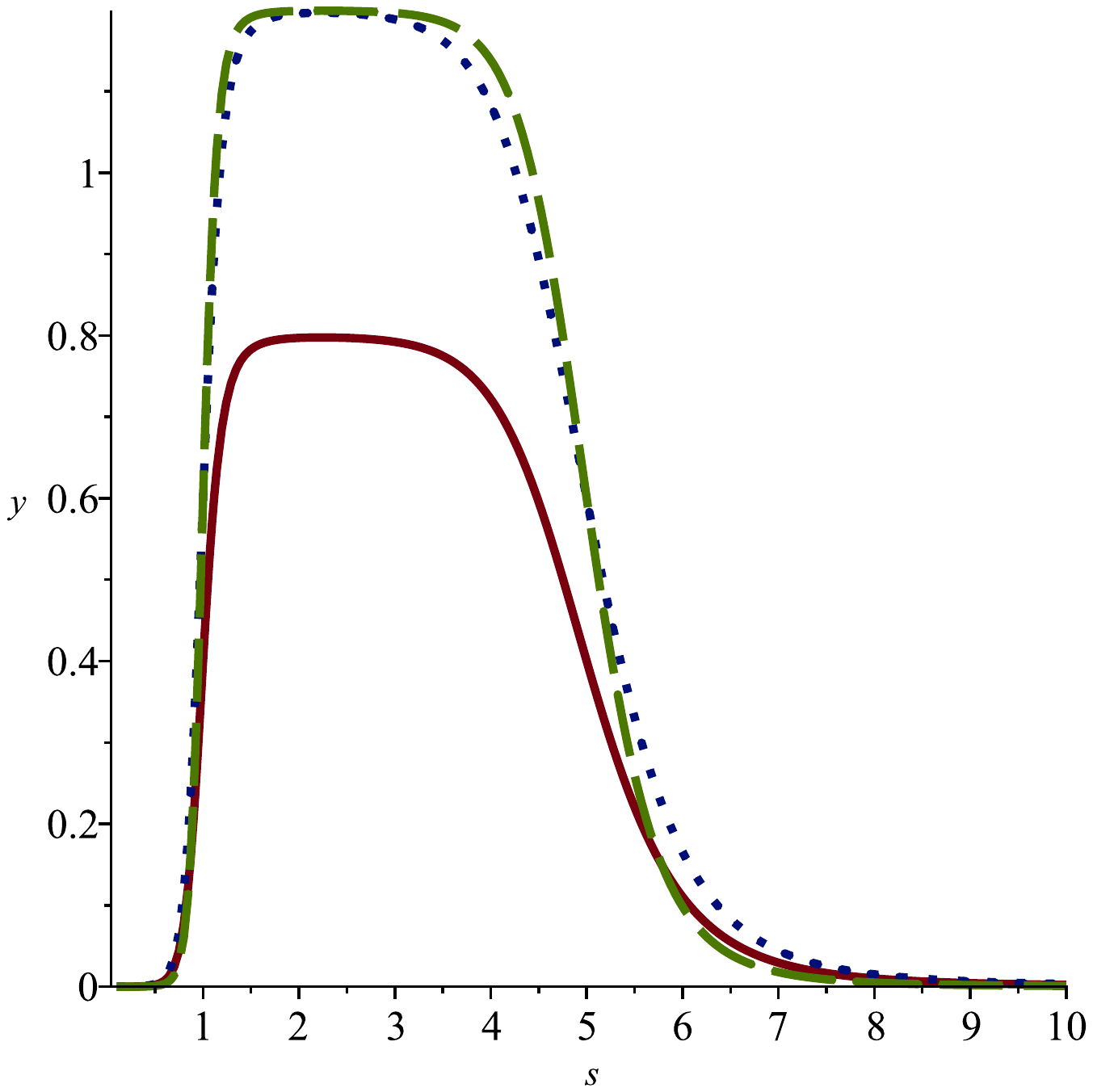} \quad
\includegraphics[scale=.34]{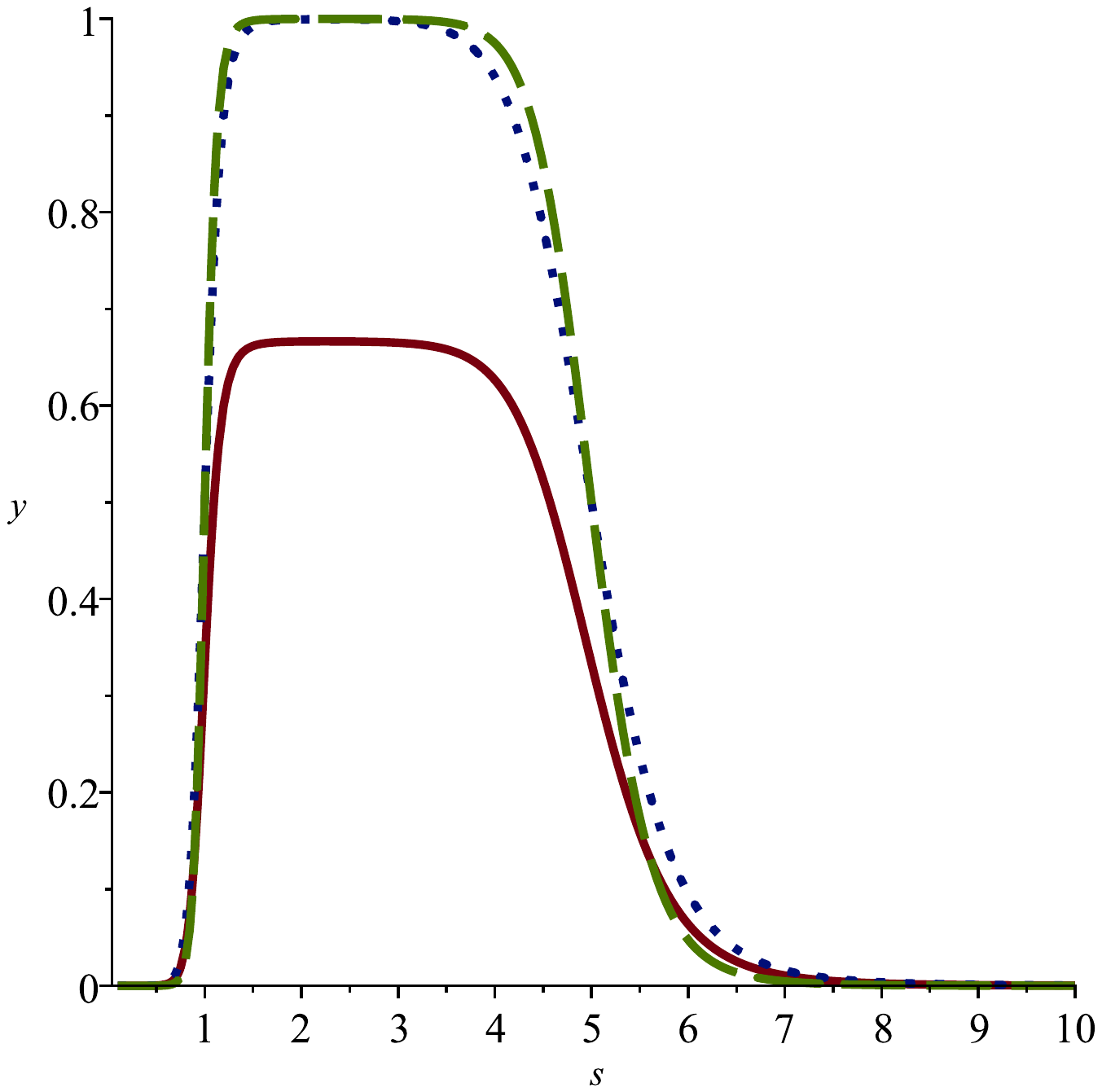}
\caption{\small Mellin-Poisson-Cauchy 
kernel: $a=1$, $b=5$, $n=2$ (line), $n=3$ (point), $n=4$  (dash), $p=3,4$.}
\end{figure}

\end{example}
\begin{example}\label{exs4}
 \emph{Linear operators}.  \, The simplest
example of functions $\Upsilon_n$ satisfying
\ref{intro3mellin}.2) is given by 
\[\Upsilon_n(u)=u, \qquad n \in \mathbb{N}, \quad  
u \in \mathbf{X}.\]
This case is widely studied in the literature 
when ${\mathbf{X}}=\mathbb{R}$ (see, e.g., \cite{BMV}
and the references therein).  
From \ref{intro3mellin}.1),
\ref{intro3mellin}.2) and Proposition 
\ref{intro3mellin} we obtain
\ref{intro2mellin}.5). Therefore, 
$\widetilde{\mathbb{K}}= (\widetilde{K}_n)_n$ is
$U$-singular, with $H=\mathbb{N}$ and $D^{(1)}=1$.
\\
Now observe that,
for every $n \in \mathbb{N}$,  
the function 
\begin{eqnarray}\label{integrabilityu}
\widetilde{K}_n(t,u)=\widetilde{L}_n(t) \, \, u, \quad
t \in \mathbb{R}^+, \, \, u \in \mathbf{X},
\end{eqnarray}
is $\mu$-integrable, 
since $\widetilde{L}_n$ is.
Moreover, we get that $\widetilde{K}_n(t,0) = 0 $ for each
$n \in \mathbb{N}$ and $t \in \mathbb{R}^+$. 
So, condition \,  \ref{assumptionsmellin}.b.1) holds.
Furthermore, it is straightforward to see
that  \, \ref{assumptionsmellin}.b.2)
is satisfied by taking $\psi_n(u)=u$ for all $n \in \mathbb{N}$ 
and $u \in {\mathbf{X}}^+$, since $(\widetilde{L}_n)_n \subset \mathcal M$. Thus, Assumptions \,\ref{assumptionsmellin}
are fulfilled.
\end{example}
\begin{example}\label{exs5}
 \emph{Nonlinear operators}.  \,
Let $0 \leq p \leq \infty, \,\, \mathbf{X}=L^p(
[0,1], \Sigma, \nu)$, where $\Sigma$ is the $\sigma$-algebra
of all measurable subsets of $[0,1]$ and $\nu$ is the 
Lebesgue measure on $[0,1]$. Note that such spaces
satisfy Axioms \ref{convergenze} and 
\ref{limsuppresentation}
are widely studied in Stochastic Integration, Stochastic Processes
and Brownian Motions, and it is possible to see that our integration
theory includes also some types of stochastic integration (see also
\cite{BCSVITALI, BS2021}).
For every $u \in {\mathbf X}, \,\, n \in \mathbb{N}$ and 
$t \in [0,1]$, set 
\begin{eqnarray*}\label{nonlinear}
\Upsilon_n(u(t))=\dfrac{n \, u(t) \, \lvert u(t) \rvert}{n \, \lvert u(t)\rvert +1}. 
\end{eqnarray*}
This case is an extension of some examples studied 
in the literature when ${\mathbf X}=\mathbb{R}$
(see \cite[Sections 5 and 6]{bmmellin}). 
First we note that, for every $n \in \mathbb{N}$, 
$\Upsilon_n$ is well-defined: indeed, for each
$u \in {\mathbf X}$ and $n \in \mathbb{N}$, we get 
\begin{eqnarray*}\label{upsilon0}
\lvert \Upsilon_n(u(t)) \rvert=\dfrac{n \, \, \lvert u(t)\rvert^2}{n \, \lvert u(t) \rvert +1}
\leq \lvert u(t) \rvert 
\end{eqnarray*} when $u(t) \neq 0$, and $\Upsilon_n (u(t))=0$
whenever $u(t)=0$. From this it follows that 
\begin{eqnarray}\label{upsilon1}
\lvert \Upsilon_n(u) \rvert \leq \lvert u \rvert
\end{eqnarray}
for any
$u \in {\mathbf X}$ and $n \in \mathbb{N}$,
and hence we deduce that $\Upsilon_n(u) \in 
{\mathbf X}$ whenever $u \in {\mathbf X}$. 

Now, observe that for each
$u \in {\mathbf X}$, $t \in [0,1]$ and $n \in \mathbb{N}$ it is 
\begin{eqnarray*}\label{upsilon2}
\lvert \Upsilon_n(u(t)) - u(t) \rvert=
 \bigg\lvert \dfrac{n \, u(t) \, \lvert u(t)\rvert }{n \, \lvert u(t)\rvert  +1} -u(t)  \bigg\rvert =
\dfrac{\lvert u(t) \rvert}{n \, \lvert u(t)\rvert +1} \leq \dfrac{1}{n} = 
\dfrac{1}{n} \cdot \mathbf{1}, 
\end{eqnarray*}
where $\mathbf{1}$
is the function which associates the real number $1$ to every 
$t \in [0,1]$. Since $\mathbf{1} \in \mathbf{X}$, 
then the $\Upsilon_n$'s satisfy
condition \ref{intro3mellin}.2) with 
$v=\mathbf{1}$ and $\sigma_n=\dfrac{1}{n}$, 
$n \in \mathbb{N}$.
From this, \ref{intro3mellin}.1) and Proposition 
\ref{intro3mellin} we obtain 
\ref{intro2mellin}.5). Hence, 
$\widetilde{\mathbb{K}}= (\widetilde{K}_n)_n$ is
$U$-singular, with $H=\mathbb{N}$ and $D^{(1)}=1$.
 \\
Moreover, from (\ref{upsilon1}) and the integrability 
of 
the functions $\widetilde{K}_n$ in (\ref{integrabilityu}),
arguing analogously as in \cite[Theorem 3.21]{dconv},
it follows that, for each $n \in \mathbb{N}$, the function
\begin{eqnarray*}\label{integrabilityupsilonn}
\widetilde{K}_n(t,u)=\widetilde{L}_n(t) \, \Upsilon_n(u),
\quad t \in \mathbb{R}^+, \, \, u \in \mathbf{X},
\end{eqnarray*}
is integrable with respect to $\mu$. Furthermore, 
since $\Upsilon_n(0)=0$, 
we obtain that $\widetilde{K}_n(t,0) = 0 $ for each
$n \in \mathbb{N}$ and $t \in \mathbb{R}^+$. 
Thus, condition \ref{assumptionsmellin}.b.1) is satisfied.
\\
Now, analogously as in \cite[Section 5, Example II]{bmmellin}, it is possible to see that
for each $u$,
$v \in {\mathbf X}$, $t \in [0,1]$ and $n \in \mathbb{N}$ one has
\begin{eqnarray*}\label{lip0}
\lvert \Upsilon_n(u(t)) - \Upsilon_n(v(t)) \rvert=
\bigg\lvert \dfrac{n \, u(t) \, \lvert u(t) \rvert }{n \, \lvert u(t)\rvert  +1} -
\dfrac{n \, v(t) \, \lvert v(t) \rvert }{n \, \lvert v(t) \rvert +1}  \bigg\rvert  \leq 
\lvert u(t)-v(t)\rvert, 
\end{eqnarray*}
and hence 
\begin{eqnarray*}\label{lip1}
\lvert \Upsilon_n(u) - \Upsilon_n(v) \rvert \leq \lvert u-v \rvert.
\end{eqnarray*}
So, condition \ref{assumptionsmellin}.b.2)
holds, with $\psi_n(u)=u$ for every $n \in \mathbb{N}$ 
and $u \in {\mathbf{X}}^+$, taking into account that 
$(\widetilde{L}_n)_n \subset \mathcal M$.
Hence, Assumptions \ref{assumptionsmellin}
are satisfied.
\end{example}


Then we deduce that 

\begin{corollary}\label{mellinexamples}
Let $q \in \mathbb{N}$, $\varphi(u)=|u|^q$ and
assume that $\widetilde{\mathbb{K}}=(
\widetilde{K}_n)_n$ is
defined by
\[ \widetilde{K}_n(t,u)=\widetilde{L}_n(t) \, 
\Upsilon_n(u), \quad 
n \in \mathbb{N}, \, \, t \in \mathbb{R}^+, \, \, u \in {\mathbf{X}}, \]
where $\widetilde{L}_n(t)$  is as in 
 Examples \ref{exs1} -- \ref{exs5}. 
Then, for any $f \in {\mathcal C}_c(\mathbb{R}^+)$,
the sequence $(\widetilde{T_n} f )_n$ 
is uniformly convergent to $f$  
on ${\mathbb{R}}^+$ and modularly convergent 
to $f$ with respect to $\rho^{\varphi}$, where the number $a$ obtained by the last convergence 
(in formula (\ref{modularconvergence0})) can be taken 
independently of $f$.
\begin{proof}
It is a consequence of 
 $U$-singularity, the argument in 
(\ref{aamu1}) and \cite[Theorems 6.5 and 6.9]{BS2021}.
\end{proof}
\end{corollary}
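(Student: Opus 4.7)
The plan is to check that in each of Examples \ref{exs1}--\ref{exs5} the hypotheses of Theorem \ref{unione6.5e6.9} hold, so that the conclusion is then just a direct invocation of that theorem, supplemented by the localization property (\ref{aamu1}). Everything that is needed has essentially already been verified inside the examples; the corollary only asks to assemble the pieces.

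First I would deal with the kernels $\widetilde{L}_n$. For Examples \ref{exs1}, \ref{exs2} and \ref{exs3} it has been established that $\widetilde{L}_n$ is non-negative, bounded by $n$ on $\mathbb{R}^+$, $\mu$-integrable, normalized so that $\int_0^{+\infty}\widetilde{L}_n(t)\,dt/t = 1$, and decays off any interval $[1/\delta,\delta]$ with the bound needed in \ref{intro2mellin}.4). Combined with Axioms \ref{convergenze}.c)--\ref{convergenze}.d), this yields $(\widetilde{L}_n)_n\subset\mathcal{M}$ together with items \ref{intro2mellin}.1), \ref{intro2mellin}.3) and \ref{intro2mellin}.4). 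Taking $H=\mathbb{N}$ (or a cofinite $H$ in Examples \ref{exs2}--\ref{exs3}, which is still enough to make \ref{intro2mellin}.2) automatic) finishes the kernel side.

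Next I would handle the fibre maps $\Upsilon_n$ in Examples \ref{exs4} and \ref{exs5}. The linear choice $\Upsilon_n(u)=u$ trivially satisfies \ref{intro3mellin}.2); the nonlinear choice does so too via the inequality $\lvert\Upsilon_n(u(t))-u(t)\rvert\le 1/n$ proved in Example \ref{exs5}, with $v=\mathbf{1}$ and $\sigma_n=1/n$. Applying Proposition \ref{intro3mellin} together with \ref{intro3mellin}.1) (already established in each example) converts this into \ref{intro2mellin}.5), and therefore $\widetilde{\mathbb{K}}$ is $U$-singular with $D^{(1)}=1$ in both cases.

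Assumptions \ref{assumptionsmellin} are verified as follows: \ref{assumptionsmellin}.b.1) holds because $\widetilde{K}_n(t,u)=\widetilde{L}_n(t)\,\Upsilon_n(u)$ is $\mu$-integrable in $t$ (since $\widetilde{L}_n$ is, and the argument of \cite[Theorem 3.21]{dconv} applies in the nonlinear case through $|\Upsilon_n(u)|\le|u|$), and $\widetilde{K}_n(t,0)=0$ because $\Upsilon_n(0)=0$. Condition \ref{assumptionsmellin}.b.2) is fulfilled with $\psi_n(u)=u$ using either $\Upsilon_n(u)=u$ or the Lipschitz bound $|\Upsilon_n(u)-\Upsilon_n(v)|\le|u-v|$ from Example \ref{exs5}, and for this choice the conditions in \ref{assumptionsmellin}.a.1)--\ref{assumptionsmellin}.a.3) are immediate.

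With $U$-singularity and Assumptions \ref{assumptionsmellin} in hand, Theorem \ref{unione6.5e6.9} can be applied to any $f\in\mathcal{C}_c(\mathbb{R}^+)$ and yields both the uniform convergence of $(\widetilde{T_n}f)_n$ to $f$ on $\mathbb{R}^+$ and the modular convergence with respect to $\rho^{\varphi}$, with the constant $a$ in (\ref{modularconvergence0}) independent of $f$. The only real bookkeeping step is checking condition \ref{intro2mellin}.4) uniformly across the three kernels, but the explicit estimates inside Examples \ref{exs1}--\ref{exs3} plus Axioms \ref{convergenze}.c)--\ref{convergenze}.d) handle this cleanly; the localization (\ref{aamu1}) derived from \cite[Remark 6.8.b)]{BS2021} takes care of the tails needed when invoking the modular-convergence part of Theorem \ref{unione6.5e6.9}.
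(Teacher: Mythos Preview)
Your proposal is correct and follows essentially the same route as the paper's proof: the paper's one-line argument cites $U$-singularity (established inside Examples \ref{exs1}--\ref{exs5}), the localization property (\ref{aamu1}), and \cite[Theorems 6.5 and 6.9]{BS2021}, which is precisely Theorem \ref{unione6.5e6.9}; you have simply unpacked these references explicitly. One small remark: the paper takes $H=\mathbb{N}$ throughout (even in Examples \ref{exs2} and \ref{exs3}), since the tail estimates valid only for $n\geq n_0$ still yield \ref{intro2mellin}.4) via Axiom \ref{convergenze}.a), so your aside about a cofinite $H$ is unnecessary though harmless.
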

We would like to highlight that our theory 
 can be used also in the following contexts.
\begin{remarks}
\phantom{a}
\begin{itemize}
\item[a)]  In \ref{intro2mellin}.2), we consider
an infinite subset $H$ of natural numbers. This set can be
different from $\mathbb{N}$, and thus our theory 
includes filter convergence, which is in general 
strictly weaker than the classical one. \\
We remember, for the reader's simplicity, that a 
 \emph{filter} on $\mathbb{N}$ is  a family $\mathcal{F} \subset \mathcal{P}(\mathbb{N})$ such that $\emptyset \not \in {\mathcal F}$,
$A \cap B \in {\mathcal F}$ whenever $A$, $B \in \mathcal{F}$, and for any $A \in \mathcal{F}$ and $B \subset \mathbb{N}$ with
$A \subset B$, one has $B\in \mathcal{F}$.
We say that a filter on ${\mathbb N}$  is \textit{free} iff  it contains ${\mathcal F}_{\text{cofin}}$, which is  the filter of all cofinite subsets of  ${\mathbb N}$.
If ${\mathcal F}$
is any fixed free filter on ${\mathbb{N}}$ different from 
${\mathcal F}_{\textrm{cofin}}$, then it is advisable to 
take in \ref{intro2mellin}.2) an infinite set $H \in {\mathcal F}$,
such that $\mathbb{N} \setminus H$ is infinite 
(see also \cite{BS2021}).
In \cite{BDMEDITERRANEAN}
some other examples are presented, where the involved 
``Mellin-type'' kernels satisfy 
$U$-singularity conditions
with respect to ${\mathcal F}$-convergence, and not with respect to 
the usual convergence. 
\item[b)] Notice that our theory can include also
the multidimensional Mellin convolution operators, 
by considering $G=
{(\mathbb{R}^+)}^N$ endowed with the scalar product, 
where the distance $d_N$
on $G \times G$ is given by 
\[d_N(s,t)=\max\{d_{\ln}(s_1,t_1), d_{\ln}(s_2, t_2), \ldots, d_{\ln}(s_N, t_N)\},\]
and $\displaystyle{\mu(A)=\mu_N(A)=\int_A \frac{(dt)^N}
{\prod_{j=1}^N t_j}},
\quad A \in {\mathcal M}^N$,
where ${\mathcal M}^N$
is the family of all measurable subsets
of $({\mathbb{R}}^+)^N$ (see, e.g., \cite{AV2, Bu2003}).
Moreover, our theory can include also some types
of non-convolution operators (see, for instance, 
\cite[Example 3]{bm1}).
\item[c)] As in \cite[Example 3.3]{BMV}, it is possible to give some examples 
of nonlinear singular operators, in which the associated sequence
$(K_n)_n$ satisfies Lipschitz-type conditions with respect to more general functions
 $\psi_n$, $n \in \mathbb{N}$, as in
\ref{assumptionsmellin}.1).
\end{itemize}
\end{remarks}

\section{Conclusion}\label{sec13}
 In this article, by continuing  the investigation started in earlier papers, we have recalled an axiomatic theory of convergence, an abstract integral (with respect to possibly infinite finitely additive measures) and modulars in vector lattice setting.
We have given some examples of Mellin-type kernels and corresponding operators (moment, Mellin-Gauss-Weierstrass, Mellin-Poisson-Cauchy) and we have illustrated the behavior of the corresponding convergences by means of figures.


{\small 
\section*{Declarations}
\noindent {\bf Conflict of interests:} The authors declare no conflict of interest.\\
{\bf Author Contributions:}
All  authors  have  contributed  equally  to  this  work  for  writing,  review  and  editing. 
All authors have read and agreed to the published version of the manuscript.
\\
{\bf Avaibility of matherials and data:} The authors confirm that the data supporting 
the results of this study are available within the article [and/or] its supplementary material in the bibliography.\\
{\bf Funding:}
This research  was  supported by
 Ricerca di Base (2018) dell'Universit\`a degli Studi di Perugia "Metodi di Teoria dell'Ap\-prossimazione,  Analisi Reale, Analisi Nonlineare e loro Applicazioni";
 Ricerca di Base (2019) dell'Universit\`a degli Studi di Perugia-"Integrazione, Ap\-prossimazione,  Analisi Nonlineare e loro Ap\-plicazioni"
and
 "Metodi di approssimazione,
misure, Analisi Funzionale, Statistica e applicazioni alla ricostruzione di immagini e docu\-menti";
 "Metodi e processi innovativi per lo sviluppo di una banca di immagini mediche per fini diagnostici"
  funded by the Fondazione Cassa di Risparmio di Perugia (FCRP), (2018);
 "Metodiche di Imaging  non invasivo mediante angiografia OCT sequenziale per lo studio delle Retinopatie   degenerative dell'Anziano (M.I.R.A.)", funded by FCRP, (2019).

{\bf Acknowledgments}
The authors are also members of the  University of Perugia and of the  GNAMPA -- INDAM (Italy).
This research has been accomplished within the UMI Group TAA “Approximation Theory and Applications”.\\ 
}


\Addresses

\begin{thebibliography}{99}
\small

\bibitem{A3}
L. Angeloni, D. Costarelli, M. Seracini, G. Vinti and L. Zampogni,
Variation diminishing-type properties for multivariate sampling Kantorovich operators, 
\emph{Boll.  Unione Mat. Ital.}, {\bf 13} (4), (2020), 595--605.

\bibitem{AV2}
 L. Angeloni and G. Vinti, Rate of approximation for nonlinear integral operators with application to signal processing, 
\emph{Different. Integral Equations}, \textbf{18} (8), (2005), 855-890.

\bibitem{bbdmkorovkin} 
C. Bardaro, A. Boccuto, X. Dimitriou  and I. Mantellini, 
Abstract Korovkin-type theorems in modular spaces and applications, \emph{Cent. Eur. J.
Math.}, \textbf{11} (10), (2013), 1774--1784.


\bibitem{Bu2014}	
 C.  Bardaro, P. L.  Butzer and I. Mantellini, 
The exponential sampling theorem of signal analysis and the reproducing kernel formula in the Mellin transform setting,
{\em  Sampl. Theory Signal Image Process.}, {\bf   13} (1),   (2014),   35--66.

\bibitem{Bu2016-1}
  C. Bardaro, P. L.  Butzer and I. Mantellini, 
 The Mellin-Parseval formula and its interconnections with the exponential sampling theorem of optical physics,
{\em  Integral Transforms Spec. Funct.}, {\bf  27} (1),  (2016),  17--29.
		
\bibitem{Bu2016-2}
 C. Bardaro,   P. L.  Butzer, I. Mantellini and G.  Schmeisser, 
 Mellin analysis and its basic associated metric - applications to sampling theory,
{\em  Anal. Math.},  {\bf 42} (4),  (2016),   297--321.

\bibitem{Bu2003}  
 C. Bardaro, P. L.  Butzer, R. L.   Stens and G. Vinti, 
 Convergence in variation and rates of approximation for Bernstein-type polynomials and singular convolution integrals,
{\em Analysis}, (Munich)  {\bf 23} (4),  (2003),  299--340.

\bibitem{Bu2007} 
 C. Bardaro, P. L.  Butzer, R. L.   Stens and G. Vinti, 
   Kantorovich-type generalized sampling series in the setting of Orlicz spaces.
{\em  Sampl. Theory Signal Image Process.}, {\bf  6} (1),  (2007),   29--52.

\bibitem{Bu2010} 
 C. Bardaro, P. L.  Butzer, R. L.  Stens and G. Vinti, 
 Prediction by samples from the past with error estimates covering discontinuous signals,
 {\em IEEE Trans. Inform. Theory}, {\bf   56} (1),  (2010),  614--633.

\bibitem{bmmellin}
 C. Bardaro and I. Mantellini, 
Pointwise convergence theorems for nonlinear Mellin  convolution operations, \emph{Int. J. Pure Applied Math.},
\textbf{27} (4), (2006), 431--447.  

\bibitem{bm1}
 C. Bardaro and I. Mantellini, 
Uniform modular integrability and convergence properties for a class of Urysohn integral operators in function spaces, 
\emph{Math. Slovaca}, \textbf{56} (4), (2006), 465--482.

\bibitem{BMV}
 C. Bardaro, J. Musielak and G. Vinti, \emph{Nonlinear Integral Operators and Applications,} de Gruyter, Berlin, 2003.

\bibitem{dconv} A. Boccuto, Integration in Riesz Spaces
with respect to $(D)$-convergence, \emph{Tatra Mt. Math. Publ.}, \textbf{5}, (1995), 107--124.

\bibitem{BC2009}
 A. Boccuto and D. Candeloro,
Integral and Ideals in Riesz Spaces, \emph{Inform. Sci.},
\textbf{179}, (2009), 2891--2902.

\bibitem{BC20092}
 A. Boccuto and D. Candeloro,
Defining Limits by means of Integrals, 
\emph{Integral Equations and Operator Theory, series: Operator Theory:  Advances and Applications}, \textbf{201}, (2009), 79--87.

\bibitem{BCSVITALI}
 A. Boccuto, D. Candeloro and A. R. Sambucini,
Vitali-Type Theorems for Filter Convergence related to vector lattice-valued Modulars and Applications to Stochastic Processes, 
\emph{J. Math. Anal. Appl.}, \textbf{419}, (2014), 818--838. 
doi:10.1016/j.jmaa.2014.05.014.

\bibitem{BCSLp}
 A. Boccuto, D. Candeloro and A. R.  Sambucini,
$L_p$-spaces in vector lattices and applications, 
\emph{Math. Slovaca}, \textbf{67} (6), (2017), 1409--1426.

\bibitem{BDMEDITERRANEAN}
 A. Boccuto and X. Dimitriou,
Modular convergence theorems for integral operators in the context  of filter exhaustiveness and applications, 
\emph{Mediterranean J. Math.}, \textbf{10} (2), (2013), 823--842.


\bibitem{BS2021}
 A Boccuto and A. R.  Sambucini, Abstract integration with respect to measures and applications to modular convergence in vector lattice setting, 
arXiv 2112.12085, (2021). 

\bibitem{Bu1973}  
P. L.  Butzer,  A survey of work on approximation at Aachen, 1968-1972,
{\em  Approximation theory (Proc. Internat. Sympos., Univ. Texas, Austin,
 Tex., 1973)}, 
 31--100. Academic Press, New York,  1973. 
		
\bibitem{Bu1986}
  P. L.  Butzer,  W. Engels, S.   Ries and R. L.   Stens, 
 The Shannon sampling series and the reconstruction of signals in terms of linear, quadratic and cubic splines,
{\em  SIAM J. Appl. Math.},  {\bf  46 } (2),  (1986),   299--323.

\bibitem{Bu2011}
  P. L.  Butzer, P. J. S. G. Ferreira, G. Schmeisser and R. L.  Stens, 
The summation formulae of Euler-Maclaurin, Abel-Plana, Poisson, and their interconnections with the approximate sampling formula of signal analysis,
{\em Results Math.},   {\bf 59} (3-4),  (2011),   359--400.

\bibitem{capri}
  P. L.  Butzer, A. Fisher and R. L.   Stens, 
 Generalized sampling approximation of multivariate signals; general  theory, Atti del Quarto Convegno di Analisi Reale e Teoria della Misura, Capri 1990,
{\em Atti Sem. Mat. Fis. Univ. Modena},  {\bf 40},  (1992),  1--21.

\bibitem{Bu1992}
  P. L.  Butzer  and G. Hinsen, 
 Reconstruction of bounded signals from pseudo-periodic, irregularly spaced samples,
{\em  Signal Process.},   {\bf 17} (1),   (1989),   1--17.

\bibitem{Bu2000}
  P. L.  Butzer and L.  Lei, 
 Approximation of signals using measured sampled values and error analysis,
{\em  Commun. Appl. Anal.}, {\bf  4} (2),  (2000),   245--255.
				
\bibitem{Bu2016-3} 
 P. L.  Butzer, G. Schmeisser and  R. L.   Stens,
 Sobolev spaces of fractional order, Lipschitz spaces, readapted modulation spaces and their interrelations; applications,
{\em  J. Approx. Theory}, {\bf 212},  (2016), 1--40.
		
\bibitem{Bu1969} 
 P. L.  Butzer  and E. L.  Stark,  On a trigonometric convolution operator with kernel having two zeros of simple multiplicity,
{\em  Acta Math. Acad. Sci. Hungar.}, {\bf 20},  (1969), 451--461.
				
\bibitem{Bu2008}
  P. L.  Butzer and R. L.  Stens, 
 Reconstruction of signals in $L^p(\mathbb{R})$-space by generalized sampling series based on linear combinations of B-splines,
{\em  Integral Transforms Spec. Funct.},  {\bf 19} (1-2),  (2008),   35--58.

\bibitem{CGAL2022}
M. Cantarini, L. Coroianu, D. Costarelli, S. G. Gal and G. Vinti, G. Inverse Result of Approximation for the Max-Product Neural Network Operators of the Kantorovich Type and Their Saturation Order, {\em Mathematics},  (2022), 10, 63. https://doi.org/10.3390/math10010063
			
\bibitem{CGAL2021}
L. Coroianu, D. Costarelli, S. G. Gal and G. Vinti, Approximation by max-product sampling Kantorovich operators with generalized kernels, 	
{\em Analysis and Applications},  {\bf 19} (2),  (2021), 219--244.
\bibitem{CS2018}
D. Costarelli and  A. R.  Sambucini, Approximation results in Orlicz spaces for sequences of Kantorovich max-product neural network operators,
{\em Results in Mathematics}, {\bf 73} (1),  (2018). Doi: 10.1007/s00025-018-0799-4. 

\bibitem{CSV2019}
D. Costarelli, A. R.  Sambucini and G. Vinti, 
Convergence in Orlicz spaces by means of the multivariate max-product neural network operators of the Kantorovich type, 
{\em Neural Computing and Applications}, 
{\bf 31} (9), 5069-5078, (2019). Doi:10.1007/s00521-018-03998-6.

\bibitem{KOZLOWSKI}
 W. M. Kozlowski, 
\emph{Modular function spaces,} Pure Appl. Math.,
Marcel Dekker, New York, 1988.

\bibitem{LZ}
 W. A. J. Luxemburg and A. C. Zaanen, 
\emph{Riesz Spaces I,}  North-Holland Publ. Co., Amsterdam, 1971.

\bibitem{MEYER}
 P. Meyer-Nieberg, \emph{Banach lattices,}
Springer-Verlag, Berlin-Heidelberg-New York, 1991.

\bibitem{SCHAEFER}
 H. H. Schaefer, \emph{Banach lattices and positive  operators,} Springer-Verlag, Berlin-Heidelberg-New York, 1974.

\bibitem{VULIKH}
 B. Z. Vulikh, \emph{Introduction to the theory of partially ordered spaces,} 
Wolters-Noordhoff Sci. Publ., Groningen, 1967. 
\end{thebibliography}
\end{document}